\newtheorem{theorem}{Theorem}%[section]
\newtheorem{lemma}[theorem]{Lemma}
\theoremstyle{definition}
\newtheorem{example}[theorem]{Example}
\newtheorem{definition}[theorem]{Definition}
\newcommand\y{\cellcolor{lightgray}}
\newcommand\z{\multicolumn{1}{|r}}
\newenvironment{smat}{\left[\begin{smallmatrix}}{\end{smallmatrix}\right]}
\title{Companion unit lower Hessenberg matrices
\footnote{{\bf Keywords:} Companion matrix, characteristic polynomial, Hessenberg matrix, polynomial basis, nilpotent matrix.}
\footnote{{\bf Mathematics subject classification:} 15A54, 15B99, 15A21.}
}
\author{Alberto Borobia and Roberto Canogar\\
\small Dpto. Matem\'aticas, Universidad Nacional de Educaci\'on a Distancia (UNED), Madrid, Spain\\
\small e-mail: $aborobia@mat.uned.es, \ rcanogar@mat.uned.es$ }
\date{\today}
\begin{document}

\maketitle

\begin{abstract}

In recent years there has been a growing interest in companion matrices. There is a deep knowledge of sparse companion matrices, in particular it is known that every sparse companion matrix can be transformed into a  unit lower Hessenberg matrix of a particularly simple type by any combination of transposition, permutation similarity and diagonal similarity. The latter is not true for the companion matrices that are non-sparse, although it is known that every non-sparse companion matrix is nonderogatory. In this work the non-sparse companion matrices that are unit lower Hessenberg will be described. A natural generalization  is also considered.

\end{abstract}

\section{Introduction}

For a given field $\mathbb{F}$, a matrix $A$ of order $n$ is said to be \emph{companion} as long as: $(i)$ $A$ has $n^2 - n$  entries that are constants of $\mathbb{F}$; $(ii)$ the $n$ remaining entries of $A$ are the variables $x_1,\ldots,x_n$; and $(iii)$   the characteristic polynomial of $A$ is
\begin{equation}\label{PolynomialOfCompanion}
\det(\lambda I_n-A)=\lambda^n-x_1\lambda^{n-1}-\cdots-x_{n-1}\lambda-x_n.
\end{equation}
The classical example is the Frobenius companion matrix
$$\begin{bmatrix}
0 & 1 & 0 & \cdots & 0   \\
0 & 0 & 1 & \ddots & 0   \\
\vdots &  & \ddots & \ddots & 0   \\
0 & 0 & \cdots & 0 & 1   \\
x_n & x_{n-1} & \cdots & x_2 & x_1   \\
\end{bmatrix}.$$ 
In 2003 Fiedler~\cite{Fiedler} introduced new companion matrices, which in turn  produced an increased interest in companion  matrices. In 2013 Ma and Zhan~\cite{Ma-Zhan} studied those matrices of order $n$ whose entries are in the field $\mathbb{F}(x_1,\ldots,x_n)$ of rational functions in the variables $x_1,\ldots,x_n$ and for which the characteristic polynomial is~(\ref{PolynomialOfCompanion}). They showed that such matrices are necessarily irreducible and have at least $2n-1$ nonzero entries. This  permits to define a \emph{sparse companion matrix} as a companion matrix with $2n-1$ nonzero entries. In 2016 Garnett et al.~\cite{Garnett-Shader-Shader-VanDerDiessche}  characterized the Ma-Zhan matrices. 

A square matrix is a \emph{unit lower Hessenberg matrix}, \emph{ULH matrix} by short, if all its  superdiagonal entries are equal to one and all its entries above the superdiagonal are equal to zero. In 2014 Eastmann  et al.~\cite{Eastman-Kim-Shader-VanderMeulen,Eastman-Kim-Shader-VanderMeulen2}  showed that every sparse companion matrix can  be transformed into a ULH matrix by any combination of transposition, permutation similarity and diagonal similarity (we will be more precise in Theorems~\ref{CharSparseCompanion} and~\ref{HMinusCNoCompanion} below). 
In 2019 Deaett et al.~\cite{Deaett-Fischer-Garnett-VanderMeulen} gave  one example that shows  that this is not necessarily the case for non-sparse companion matrices. On the other hand, they  proved that any companion matrix is nonderogatory. 

If $A$ is a companion matrix then $(1,-x_1,\ldots,-x_n)$ are the coordinates of $\det(\lambda I_n-A)$ with respect to the monomial basis $\{\lambda^n, \lambda^{n-1}, \ldots, \lambda,1\}$ of   the $(n+ 1)$-dimensional vector space $\mathbb{F}_n[\lambda]$ of polynomials of degree at most $n$ on the variable $\lambda$. The concept of companion matrix can be generalized whenever  we consider any polynomial basis of    $\mathbb{F}_n[\lambda]$  other than the monomial basis.
For example, consider the Newton basis 
\begin{align*} %\label{NewtonBasis}
\left\{\prod_{h=1}^{n}(\lambda-\gamma_h),\;\; \prod_{h=1}^{n-1}(\lambda-\gamma_h),\;\; \ldots,\;\; (\lambda-\gamma_1), 1\right\}    
\end{align*} 
that occurs on interpolation at the points $\lambda=\gamma_1, \ldots, \lambda=\gamma_{n}$. 
Actually, Farahat and Ledermann~\cite{Farahat-Ledermann} noted that a Frobenius companion matrix in which we modify the main diagonal,
\begin{equation*} \label{FL}
    A=\begin{bmatrix}
\gamma_1 & 1 & 0 & \cdots & 0 \\
0 & \gamma_2 & 1 & \ddots & \vdots \\
\vdots & \ddots & \ddots & \ddots & 0 \\
0 & \cdots & 0  & \gamma_{n-1} & 1 \\
x_n & \cdots & \cdots & x_2 & x_1+\gamma_n 
\end{bmatrix},
\end{equation*}
has characteristic polynomial 
$$\det(\lambda I_n-A)=\prod_{h=1}^{n}(\lambda-\gamma_h)-x_1\prod_{h=1}^{n-1}(\lambda-\gamma_h)-\cdots-x_{n-1}(\lambda-\gamma_1)-x_n.$$
Therefore $(1,-x_1,\ldots,-x_n)$ are the coordinates of $\det(\lambda I_n-A)$ with respect to the Newton basis. The entry $x_1+\gamma_n$ causes matrix $A$ to not  meet  the condition $(ii)$ of the definition of  companion matrix. Nevertheless this can be sidestepped by making $\gamma_n=0$.

By results of~\cite{Barnett75, Barnett81, Maroulas-Barnett}, to name a few references in this direction, we can go even further and 
regard 
\begin{equation} \label{Barnet}
\begin{bmatrix}
d_{11} & 1 & 0 & \cdots & 0 \\
d_{21} & d_{22} & 1 & \ddots & \vdots \\
\vdots & \ddots & \ddots & \ddots & 0 \\
d_{n-1, 1} & \cdots & d_{n-1, n-2}  & d_{n-1, n-1} & 1 \\
x_n & \cdots & x_3 & x_2 & x_1 
\end{bmatrix},
\end{equation}
as a  companion matrix with respect to some polynomial basis of $\mathbb{F}_n[\lambda]$. We will not give the specific polynomial basis for this matrix since in Theorems~\ref{CSonGeneralizadas} and~\ref{CharacterizationLength1or2}  we will consider broader families of matrices (that includes~(\ref{Barnet}) as a particular case) for which  such a polynomial basis exists. And in the proof of Lemma~\ref{GCMinBP(i-iii)}  we will see how the  polynomial basis is constructed. These kind of results  have motivated us to generalize the concept of companion matrix\footnote{Garnett et al. in~\cite{Garnett-Shader-Shader-VanDerDiessche}  employ the term  generalized companion matrix for those  matrices of order $n$ with $2n-1$ nonzero entries, $n-1$ are ones and $n$ are elements of the ring $\mathbb{F}[x_1,\ldots,x_n]$, with characteristic polynomial equal to~(\ref{PolynomialOfCompanion}). In the literature there exist other generalizations of companion matrices (see for instance the introduction of the recent work of De Ter\'an and Hernando~\cite{DeTeran-Hernando}).}.

A  matrix $A$ of order $n$ is said to be   \emph{companion  with respect to a polynomial basis}, \emph{PB-companion} by short, as long as:  $(i)$ $A$ has  $n^2 - n$  entries that are constants of the field $\mathbb{F}$; $(ii)$   the $n$ remaining entries of $A$ are the variables $x_1,\ldots,x_n$; and $(iii)$ the characteristic polynomial of $A$  is 
\begin{align*} 
\det(\lambda I_n-A)=p_n(\lambda) - x_1 p_{n-1}(\lambda)-\cdots- x_{n-1} p_1(\lambda) - x_n p_0(\lambda)
\end{align*}
where $\{p_n(\lambda), \ldots, p_1(\lambda), p_0(\lambda) \}$ is a polynomial basis of   $\mathbb{F}_n[\lambda]$. 

A companion matrix is  PB-companion  with respect to  the monomial basis $\{\lambda^n,\ldots,\lambda,1\}$  of $\mathbb{F}_n[\lambda]$.

The main concern  of this work is to describe both,  the companion unit lower Hessenberg matrices and the PB-companion unit lower Hessenberg  matrices. 

\section{Different sets of ULH matrices to be considered} \label{SomeNotation}

In the ULH matrices  that are candidates to be companion, the positions $(i_1,j_1),\ldots,(i_n,j_n)$ of the variables  $x_1,\ldots,x_n$ below the subdiagonal play  an essential role. In order to  avoid redundancies  we  define a total order in the set $\mathbb{Z}\times \mathbb{Z}$  by
$$
(i_r, j_r) \prec (i_s, j_s) \text{ if and only if } \begin{cases}
i_r-j_r < i_s-j_s  \\
\text{or}\\
i_r-j_r = i_s-j_s \text{ and } i_r < i_s.
\end{cases}
$$ 
This establishes the order in which $x_1,\ldots,x_n$ should be placed. 

%\begin{equation} \label{PositionXs}
% \text{$x_1,\ldots,x_n$ are the  $(i_1,j_1),\ldots,(i_n,j_n)$ entries with $(i_1, j_1) \prec \cdots \prec (i_n, j_n)$.} 
%\end{equation}
For the proper development of this work we  will introduce several sets of ULH matrices:
\begin{itemize}
 \item Let $\mathcal{H}$ be the set of ULH matrices of any order $n$  that have  $2n-1$ nonzero entries: $n-1$ entries in the superdiagonal equal to one, and $n$ entries below the superdiagonal equal to $x_1,\ldots,x_n$ that are placed  according to the order $\prec$. Let us describe the matrices of $ \mathcal{H}$ more precisely:
    \begin{eqnarray*} \label{DefinitionH}
    &H_{(i_1,j_1),\ldots,(i_n,j_n)} \text{ is the ULH matrix of order $n$ with $2n-1$ nonzero entries} \\
    &\text{whose $(i_1,j_1),\ldots,(i_n,j_n)$ entries are $x_1,\ldots,x_n$}\nonumber \\
    &\text{with $(i_1, j_1) \prec \cdots \prec (i_n, j_n)$ and  $1\leq j_k \leq i_k  \leq n$ for $k=1,\ldots,n$.} \nonumber 
    \end{eqnarray*}
    
%%%%%%%%%%%%%%%%%%%%%%%%%%%%%%%%%%%%%%%%%%%%%%%%%%%%%%%%%%%%    
%%%%%%%%%%%%%%%%%%%%%%%%%%%%%%%%%%%%%%%%%%%%%%%%%%%%%%%%%%%%

    \item Let $\mathcal{D}$ be the set composed by those matrices of $\mathcal{H}$ in which  the variable $x_1$ is located in the diagonal, $x_2$ in the subdiagonal, $x_3$ in the second subdiagonal, and so forth. Let us describe the matrices of $ \mathcal{D}$ more precisely:
    \begin{equation*} \label{DefinitionD}
    \text{$D_{i_1,i_2,\ldots,i_n}=H_{(i_1,i_1),(i_2,i_2-1)\ldots,(i_n,i_n-n+1)}$  whenever  $k\leq i_k$ for $k=1,\ldots,n$.}
    \end{equation*}

    \item Let $\mathcal{C}$ be the set composed by those matrices of $\mathcal{D}$ in which the variables $x_1,\ldots,x_n$ lay in the rectangular submatrix with top-right vertex the  $(i_1,i_1)$ entry   and with bottom-left vertex  the  $(n,1)$ entry.  Let us describe the matrices of $\mathcal{C}$ more precisely:
    \begin{equation} \label{DefinitionC}
    \text{$C_{i_1,\ldots,i_n}=D_{i_1,\ldots,i_n}$  whenever $i_k-k+1 \leq i_1 \leq  i_k$ for $k=1,\ldots,n$.}
    \end{equation}

    \item Let $\mathcal{G}$ be the set composed by those matrices of $\mathcal{C}$  which   have at least one variable on each one of the rows $i_1,\ldots,n$ and  on each one of the columns $1,\ldots,i_1$.  Let us describe the matrices of $\mathcal{G}$ more precisely:
    \begin{align*} \label{DefinitionG}
    \text{$G_{i_1,\ldots,i_n}=C_{i_1,\ldots,i_n}$  whenever } & \text{$\{i_1,\ldots,i_n\}=\{i_1,\ldots,n\}$ and}  \\
    & \text{$\{i_1,i_2-1,\ldots,i_n-n+1\}=\{1,\ldots,i_1\}$.} \nonumber
    \end{align*}

    \item And finally,  let $\mathcal{F}$ be the set composed by those matrices of $\mathcal{G}$ such that for   $k=2,\ldots,n$ the  variable $x_{k}$ is in the same row or in the same column than the variable $x_{k-1}$. So in a matrix of  $\mathcal{F}$ the variables  $x_1,\ldots,x_n$  form a lattice path starting on the  diagonal with $x_1$ and ending on the bottom-left corner with $x_n$. Let us describe the matrices of $\mathcal{F}$ more precisely:
    \begin{equation*} \label{DefinitionF}
    \text{$F_{i_1,\ldots,i_n}=G_{i_1,\ldots,i_n}$  whenever either $i_k=i_{k-1}$ or $i_k=i_{k-1}+1$ for $k=2,\ldots,n$.}
    \end{equation*}

%%%%%%%%%%%%%%%%%%%%%%%%%%%%%%%%%%%%%%%%%%%%%%%%%%%%%%%%%%%%%%%%%%%%%%%%
%%%%%%%%%%%%%%%%%%%%%%%%%%%%%%%%%%%%%%%%%%%%%%%%%%%%%%%%%%%%%%%%%%%%%%%%

\end{itemize}

As the  following  examples shows, we have   $\mathcal{H}\supsetneq  \mathcal{D}\supsetneq  \mathcal{C}\supsetneq  \mathcal{G}\supsetneq  \mathcal{F}.$   
$$ \begin{array}{|c|c|c|}   \hline
H_{(1,1),(3,3),(2,1),(5,4),(5,2)} \in \mathcal{H}\setminus \mathcal{D} &  D_{3,5,3,5,5} \in \mathcal{D}\setminus \mathcal{C} & C_{3,4,3,4,5} \in \mathcal{C} \setminus \mathcal{G}  \\ \hline \vspace{-3mm}
& &  \\  
  \begin{bmatrix}
x_1 & 1 & 0 & 0 & 0   \\
x_3 & 0 & 1 & 0 & 0   \\
0 & 0 & x_2 & 1 & 0   \\
0 & 0 & 0 & 0 & 1   \\
0 & x_5 & 0 & x_4 & 0   \\
\end{bmatrix} &
  \begin{bmatrix}
0 & 1 & 0 & 0 & 0   \\
0 & 0 & 1 & 0 & 0   \\
x_3 & 0 & x_1 & 1 & 0   \\
0 & 0 & 0 & 0 & 1   \\
x_5 & x_4 & 0 & x_2 & 0   \\
\end{bmatrix} &  \begin{bmatrix}
0 & 1 & 0 & 0 & 0   \\
0 & 0 & 1 & 0 & 0   \\
x_3 & 0 & x_1 & 1 & 0   \\
x_4 & 0 & x_2 & 0 & 1   \\
x_5 & 0 & 0 & 0 & 0   \\
\end{bmatrix}  \\ \hline
\end{array}
$$
$$ \begin{array}{|c|c|}  \hline
G_{3,4,3,5,5} \in \mathcal{G}\setminus \mathcal{F} & F_{3,4,4,4,5}\in \mathcal{F}  \\ \hline \vspace{-3mm}
&   \\  
  \begin{bmatrix}
0 & 1 & 0 & 0 & 0   \\
0 & 0 & 1 & 0 & 0   \\
x_3 & 0 & x_1 & 1 & 0   \\
0 & 0 & x_2 & 0 & 1   \\
x_5 & x_4 & 0 & 0 & 0   \\
\end{bmatrix} & \begin{bmatrix}
0 & 1 & 0 & 0 & 0  \\
0 & 0 & 1 & 0 & 0   \\
0 & 0 & x_1 & 1 & 0   \\
x_4 & x_3 & x_2 & 0 & 1   \\
x_5 & 0 & 0 & 0 & 0   \\
\end{bmatrix}  \\ \hline
\end{array}
$$

\medskip

{\bf Observations:} 1. The choice of letters for the sets of matrices was made according to the following mnemonics rule: $\mathcal{H}$ for Hessenberg (this is the largest set to consider), $\mathcal{D}$ for diagonal (each variable is in a different diagonal), $\mathcal{C}$ for companion (these are the sparse companion ULH matrices), $\mathcal{G}$ for gather (rows $i_1$ to $n$ and columns 1 to $i_1$ gather all variables with each row  and each column having at least one), and $\mathcal{F}$ for Fiedler (these are the Fiedler companion matrices).  

2. Each matrix $A$ of order $n$ has associated  a digraph of $n$ vertices in which the vertex $i$ is joined with the vertex $j$ by an oriented edge if and only if the $(i,j)$ entry of $A$ is nonzero.  If $A_1\in \mathcal{H}\setminus \mathcal{D}$, $A_2\in \mathcal{D}\setminus \mathcal{C}$, $A_3\in \mathcal{C}\setminus \mathcal{G}$, $A_4\in \mathcal{G}\setminus \mathcal{F}$, and $A_5\in \mathcal{F}$, then it is not difficult to see that the digraph associated to $A_i$ is not isomorphic to the digraph associated to $A_j$ for $1\leq i < j \leq 5$. In~\cite{Eastman-Kim-Shader-VanderMeulen}  the authors studied the digraphs associated to matrices of $\mathcal{C}$. 

\section{Companion ULH matrices} \label{ulHCompanionMatrices}

From now on we will frequently use the following definitions and notations:
\begin{enumerate}[a)]
    \item Given a  matrix $A$, if nothing is mentioned,  it will be assumed that it is square of order $n$. 
    \item\label{NotationCharPol} Instead of $\det(\lambda I_n-A)$ we will use  the shorter notation $P_A(\lambda)$ to refer to the characteristic polynomial of $A$.  
    \item\label{Not-emptyMatrix} The matrix $A$ of order 0 is the \emph{empty matrix},  with $\det(A)=1$ and   $P_A(\lambda)=1$.
    \item\label{Not-A0} The \emph{constant part} of a matrix $A$ whose entries are  elements of the ring $\mathbb{F}[x_1,\ldots,x_n]$ is the constant matrix   obtained from $A$ by doing $x_1=\cdots=x_n=0$.
    \item For $k=0,1,\ldots,n-1$ the \emph{$k$-subdiagonal} of $A$  is $(a_{k+1,1},a_{k+2,2},\ldots,a_{n,n-k})$. 
    \item $s_k(A)$ is the sum of the entries on the $k$-subdiagonal of $A$.
    \item $A[i_1,\ldots,i_r;j_1,\ldots,j_s]$ is the submatrix of all entries  in rows $i_1,\ldots,i_r$ and columns  $j_1,\ldots,j_s$.
    \item The \emph{$k$-block} of $A$ is the submatrix $A[k,\ldots,n;1,\ldots,k]$ of $A$ with top-right corner at position $(k,k)$ and bottom-left corner at position $(n,1)$.  
    \item  $A[k_1,\ldots,k_r]$ is $A[k_1,\ldots,k_r;k_1,\ldots,k_r]$. 
    \item The \emph{$k$-leading principal submatrix of $A$} is $A[1,\ldots,k]$.
    \item The \emph{$k$-trailing principal submatrix of $A$} is $A[n-k+1,\ldots,n]$.
    \item $A[i, \ldots,j]$  is  the empty matrix if $i,\ldots,j$ is not a valid range for $A$.
    \item A \emph{companion ULH matrix} is a companion matrix which is also ULH.

    \item If starting with a matrix $A$ we obtain $B$ by changing   zero entries of $A$ by nonzero constants, we will say that $B$ is a \emph{superpattern} of $A$. This terminology comes from~\cite{Eastman-Kim-Shader-VanderMeulen}.

    \item If starting with a ULH matrix $A$ we obtain  $B$ by changing   zero entries of $A$ below the superdiagonal  by nonzero constants, we will say that $B$ is a \emph{ULH superpattern} of $A$.
    
    \item   The set of all ULH superpatterns of a ULH matrix $A$ is denoted by $\widetilde{A}$. 
    
    \item The set of all  ULH superpatterns of all  matrices of,  respectively,  $\mathcal{H}$,  $\mathcal{D}$, $\mathcal{C}$,  $\mathcal{G}$, and  $\mathcal{F}$ is denoted by,  respectively, $\widetilde{\mathcal{H}}$,  $\widetilde{\mathcal{D}}$, $\widetilde{\mathcal{C}}$,  $\widetilde{\mathcal{G}}$, and  $\widetilde{\mathcal{F}}$.
    \end{enumerate}

\subsection{From sparse to non-sparse}

Eastman et al. characterized the sparse companion matrices.

\begin{theorem}~\cite{Eastman-Kim-Shader-VanderMeulen2} \label{CharSparseCompanion}
$A$ is a sparse companion matrix if and only if $A$ can be obtained from some matrix of $\mathcal{C}$ by any combination of transposition, permutation similarity and diagonal similarity.
\end{theorem}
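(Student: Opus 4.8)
The plan is to prove the two implications separately, using the digraph of the matrix together with the cycle (Coates) expansion of the characteristic polynomial as the main tool. Throughout I would exploit that, for a weighted digraph, the coefficient of $\lambda^{n-k}$ in $P_A(\lambda)$ is a signed sum, over the linear subdigraphs (vertex-disjoint unions of cycles) covering exactly $k$ vertices, of the products of the edge weights. The whole argument turns on reading off, from the companion requirement $P_A(\lambda)=\lambda^n-x_1\lambda^{n-1}-\cdots-x_n$, precisely which cycles are allowed to occur.

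For the implication $(\Leftarrow)$ I would first check that every $C_{i_1,\ldots,i_n}\in\mathcal C$ is itself a sparse companion matrix. It is ULH with exactly $2n-1$ nonzero entries ($n-1$ ones and the $n$ variables), so only its characteristic polynomial needs verification. In $\mathcal C$ the variable $x_k$ sits on the $(k-1)$-subdiagonal at $(i_k,i_k-k+1)$; together with the superdiagonal ones it closes a single cycle on the vertices $i_k-k+1,\ldots,i_k$ of length $k$ and weight $x_k$. The defining inequality $i_k-k+1\le i_1\le i_k$ of (\ref{DefinitionC}) says precisely that every one of these cycles passes through the vertex $i_1$; hence no two of them are vertex-disjoint, no product $x_jx_k$ can survive, and one checks that there are no further cycles. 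Thus each $x_k$ contributes exactly $-x_k\lambda^{n-k}$ and $P_C(\lambda)=\lambda^n-\sum_k x_k\lambda^{n-k}$. Finally, transposition and permutation similarity leave the characteristic polynomial, the number of nonzero entries, and the set of variable entries unchanged, while diagonal similarity does the same once its diagonal is chosen so that the variable entries are preserved; so every companion matrix obtained from $\mathcal C$ is again sparse companion.

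For $(\Rightarrow)$ let $A$ be an arbitrary sparse companion matrix. By the result of Ma and Zhan quoted in the introduction it is irreducible, so its digraph is strongly connected on $n$ vertices with exactly $2n-1$ edges, of which $n$ carry the variables and $n-1$ carry nonzero constants. The constant term of $P_A$ is $-x_n$, a single squarefree monomial of degree one in the variables; passing this through the cycle expansion forces the spanning linear subdigraphs to reduce to a single cycle carrying a single variable, which I would argue must be a unique Hamiltonian cycle through all $n$ vertices using $x_n$ and the $n-1$ constant edges. Relabelling the vertices by a permutation similarity so that this cycle becomes $1\to 2\to\cdots\to n\to 1$, and transposing if necessary, places the Hamiltonian cycle on the superdiagonal together with the corner entry, after which diagonal similarity normalizes the $n-1$ constants to ones. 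The outcome is a ULH matrix with all constants equal to one and the $n$ variables below the superdiagonal.

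It remains to show that the variables are then placed exactly as prescribed by $\mathcal C$, and this is the step I expect to be the main obstacle. Linearity of $P_A$ in each $x_k$ forbids any two variable-edges from lying on vertex-disjoint cycles, which I would use to force all variable-cycles to share a common vertex and thereby to recover the rectangle condition (\ref{DefinitionC}); matching the monomial $x_k\lambda^{n-k}$ to a length-$k$ cycle forces $x_k$ onto the $(k-1)$-subdiagonal, i.e. the passage to $\mathcal D$, and the order $\prec$ fixes the labelling. The delicate point is to rule out all spurious cycles and cancellations: one must verify that no alternative combination of the backward variable-edges and the forward unit-edges can reproduce the required coefficients in a different pattern, so that after the normalizations the matrix is literally some $C_{i_1,\ldots,i_n}$. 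Establishing this exhaustive correspondence between strongly connected Hessenberg digraphs with $2n-1$ edges and the admissible placements defining $\mathcal C$ is where the real work lies.
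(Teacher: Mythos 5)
This theorem is not proved in the paper at all: it is quoted from Eastman, Kim, Shader and Vander Meulen \cite{Eastman-Kim-Shader-VanderMeulen2}, and it is the principal result of that paper (together with its corrigendum), so there is no internal proof to compare your attempt against. Judged on its own terms, your proposal is a reasonable programme but not a proof, and you say so yourself. The $(\Leftarrow)$ half is essentially sound: the cycle expansion of $P_{C}(\lambda)$, together with the observation that condition~(\ref{DefinitionC}) forces every variable cycle through the vertex $i_1$, correctly shows each $C_{i_1,\ldots,i_n}$ is sparse companion, and transposition and permutation similarity clearly preserve this. One caveat even here: under this paper's definition a companion matrix must have the variables $x_1,\ldots,x_n$ \emph{themselves} as entries, and a general diagonal similarity rescales a variable entry $x_k$ to $(d_i/d_j)x_k$; your parenthetical ``once its diagonal is chosen so that the variable entries are preserved'' quietly restricts the class of diagonal similarities, whereas the theorem says ``any combination,'' so this mismatch between the two definitions of companion (strict variables versus scalar multiples, as in \cite{Eastman-Kim-Shader-VanderMeulen}) needs to be addressed explicitly rather than absorbed into a side remark.

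The genuine gap is the $(\Rightarrow)$ direction, at exactly the places you flag. First, the inference from ``the constant term of $P_A$ is $-x_n$'' to ``there is a unique Hamiltonian cycle using $x_n$ and all $n-1$ constant edges'' does not follow from the Coates expansion alone: the constant term is a signed sum over \emph{all} spanning cycle covers, and a priori these include covers that are disjoint unions of a constant-weight cycle with a variable cycle, contributing terms $c\,x_k$; nothing you have said rules out several such covers whose contributions cancel or conspire, nor does it rule out cycles among the $n-1$ constant edges in the first place. Second, after your normalization you assert that the remaining $n-1$ variable edges all land below the superdiagonal (i.e.\ that the matrix is ULH); a variable edge $i\to j$ with $j>i+1$ must be excluded by an argument that any cycle through it carries a second variable and that the resulting monomial $x_kx_l\lambda^m$ cannot cancel against other linear subdigraphs --- again a cancellation analysis you do not supply. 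Third, the final step --- forcing the common-vertex rectangle condition of~(\ref{DefinitionC}) and the placement of $x_k$ on the $(k-1)$-subdiagonal, with no alternative edge pattern reproducing the coefficients --- is precisely the content of the Eastman et al.\ paper, and you explicitly defer it (``where the real work lies''). It is telling that the published proof itself required a corrigendum \cite{Eastman-Kim-Shader-VanderMeulen2}: the delicate exhaustion of spurious cycle covers that you postpone is exactly where the original argument was subtle. As it stands, your proposal establishes the easy implication and a credible strategy for the hard one, but the hard implication remains unproved.
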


And they also determined which are the sparse companion matrices within  $\mathcal{H}$. Note that the set of sparse companion matrices of  $\mathcal{H}$ is   the same as the set of sparse companion ULH matrices. So  we adapt their result to the notation we have introduced.

\begin{theorem}~\cite[Theorem 4.1]{Eastman-Kim-Shader-VanderMeulen}    \label{HMinusCNoCompanion}
The  sparse companion ULH matrices  are the matrices of  $\mathcal{C}$.
\end{theorem}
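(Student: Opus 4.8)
The plan is to prove the two inclusions separately, using Theorem~\ref{CharSparseCompanion} for the easy one and a direct analysis of the characteristic polynomial for the hard one. For the containment of $\mathcal{C}$ in the set of sparse companion ULH matrices: given $M\in\mathcal{C}$, the backward direction of Theorem~\ref{CharSparseCompanion}, applied with the trivial (identity) combination of transposition, permutation similarity and diagonal similarity, makes $M$ a sparse companion matrix; and since $\mathcal{C}\subseteq\mathcal{H}$, the matrix $M$ is ULH with exactly $2n-1$ nonzero entries. So this inclusion is immediate, and the substance is the reverse one.

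For the reverse inclusion, let $A$ be a sparse companion ULH matrix. I would begin with a shape reduction: being ULH, $A$ carries the $n-1$ superdiagonal ones and zeros above the superdiagonal; being sparse companion, it has $2n-1$ nonzero entries, of which $n$ are the variables $x_1,\ldots,x_n$. The $n-1$ ones already account for all of its nonzero constants, so below the superdiagonal every entry is either $0$ or a variable, and in particular the variables occupy positions on or below the main diagonal; thus $A$ has the shape of a member of $\mathcal{H}$. To locate the variables I would use the unit lower Hessenberg recurrence $H_k=(\lambda-a_{kk})H_{k-1}-\sum_{j=1}^{k-1}a_{kj}H_{j-1}$, where $H_k=P_{A[1,\ldots,k]}(\lambda)$ and $H_0=1$. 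Isolating a single variable (setting the others to $0$) shows that the variable at position $(i,j)$ contributes to $P_A(\lambda)$ exactly the term $-a_{ij}\lambda^{\,n-(i-j)-1}$. Matching this against $\lambda^n-x_1\lambda^{n-1}-\cdots-x_n$ forces the variable responsible for $-x_k\lambda^{n-k}$ onto the $(k-1)$-subdiagonal; two variables on one subdiagonal would make a coefficient equal to a sum of two distinct variables, so each subdiagonal carries exactly one variable and the labelling agrees with the order $\prec$. Therefore $A\in\mathcal{D}$.

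It remains to force the rectangle condition defining $\mathcal{C}$. Writing $x_1$ at $(i_1,i_1)$ and $x_k$ at $(i_k,j_k)$ with $j_k=i_k-k+1$, I observe from the recurrence that a product of two variables in $P_A(\lambda)$ can only arise when one variable lies in a leading principal submatrix generated by the other. If the condition $j_k\le i_1\le i_k$ fails for some $k$, then either $j_k>i_1$, in which case $x_1\in A[1,\ldots,j_k-1]$, or $i_k<i_1$, in which case $x_k\in A[1,\ldots,i_1-1]$; a short computation shows these two situations are mutually exclusive and that each forces the monomial $x_1x_k\lambda^{\,n-k-1}$ into $P_A(\lambda)$ with coefficient $+1$. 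This nonzero degree-two term contradicts the companion form of $P_A(\lambda)$, which is affine in the variables, so the rectangle condition must hold and $A\in\mathcal{C}$. The main obstacle I anticipate is precisely this last step: setting up the Hessenberg recurrence cleanly and, above all, showing that a single violation of the rectangle condition yields a genuinely uncancelled product monomial, rather than one that might be annihilated by contributions coming from other pairs of variables.
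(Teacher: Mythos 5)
Your argument is correct, but note that the paper does not actually prove this statement: Theorem~\ref{HMinusCNoCompanion} is imported from Eastman et al.\ (their Theorem 4.1) with only a change of notation, so there is no in-paper proof to compare against directly. What you have done, in effect, is reconstruct the sparse case of the paper's own proof of the more general Theorem~\ref{CCompanion}: your degree-one matching, which pins exactly one variable to each subdiagonal with $x_k$ on the $(k-1)$-subdiagonal, is the paper's exclusion of $\widetilde{\mathcal{H}}\setminus\widetilde{\mathcal{D}}$; your uncancelled monomial $x_1x_k\lambda^{n-k-1}$ under a violated rectangle condition is the paper's exclusion of $\widetilde{\mathcal{D}}\setminus\widetilde{\mathcal{C}}$ via the term $x_1x_s\lambda^{n-1-s}$; and the single-variable contribution $-a_{ij}\lambda^{n-(i-j)-1}$ that you extract from the Hessenberg recurrence is exactly the sparse instance of the paper's identity~(\ref{PolCharInPolBasis}). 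Two remarks. First, the cancellation obstacle you flag at the end is not a genuine one: since each variable occupies exactly one entry of $A$, the monomial $x_1x_k\lambda^{n-k-1}$ arises from exactly one choice of two disjoint index intervals in the expansion of $\det(\lambda I_n-A)$, and no other product $x_rx_s$ can interfere because distinct quadratic monomials in independent variables are linearly independent; hence its coefficient is exactly $+1$ and it survives. Second, your forward inclusion need not lean on the cited Theorem~\ref{CharSparseCompanion}: for $A\in\mathcal{C}$ every interval $[i_k-k+1,\,i_k]$ covered by a variable contains the index $i_1$, so no two intervals are disjoint, no product terms occur at all, and your own expansion already yields $P_A(\lambda)=\lambda^n-x_1\lambda^{n-1}-\cdots-x_n$ directly; this would make the whole proof self-contained, and it is precisely how the conditions of Theorem~\ref{CCompanion} certify the matrices of $\mathcal{C}$ (all the relevant constant blocks are shift matrices, hence nilpotent).
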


When we broaden our gaze to non-sparse companion matrices we will no longer obtain a result like  Theorem~\ref{CharSparseCompanion}. This is so because Deaett et al.~\cite{Deaett-Fischer-Garnett-VanderMeulen} showed one non-sparse companion matrix  that  can not be transformed into a ULH matrix by any combination of transposition, permutation similarity and diagonal similarity. So we will focus our efforts to obtain a result like  Theorem~\ref{HMinusCNoCompanion} also valid for \mbox{non-sparse} companion ULH matrices.  The following theorem tells us where we should look for the companion ULH matrices and how to characterize them.

\begin{theorem} \label{CCompanion}
Any   companion ULH matrix  belongs to $\widetilde{\mathcal{C}}$. Moreover, if $A\in \widetilde{C}_{i_1,\ldots,i_n}$ then $A$ is a  companion matrix if and only if the following conditions are met:
\begin{enumerate}[$(i)$]
    \item The constant part of $A$ is a nilpotent matrix;
    \item $A[1,\ldots,i_k-k]$ is a constant nilpotent matrix or the empty matrix for  $k=1,\ldots,n$;
    \item  $A[i_k+1,\ldots,n]$  is a constant nilpotent matrix or the empty matrix for  $k=1,\ldots,n$.    
\end{enumerate}  
If the only nonzero entries of the $i_1$-block of $A$  are $x_1,\ldots,x_n$ then condition $(i)$ can  be omitted.
\end{theorem}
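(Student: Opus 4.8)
The engine of the proof is the standard cofactor recursion for the characteristic polynomial of a ULH matrix. Writing $A=(a_{st})$ and expanding $\det(\lambda I-A[1,\dots,m])$ along its last row (all entries strictly above the superdiagonal vanish and the superdiagonal entries equal $1$), one obtains
\[
P_{A[1,\dots,m]}(\lambda)=(\lambda-a_{mm})\,P_{A[1,\dots,m-1]}(\lambda)-\sum_{j=1}^{m-1}a_{mj}\,P_{A[1,\dots,j-1]}(\lambda),\qquad P_{A[1,\dots,0]}=1 .
\]
I would first record two elementary consequences, to be used throughout. (a) \emph{Linearity in the seed:} if a sequence $(d_s)$ obeys the recursion attached to a matrix $B$ but starts from $d_0=f(\lambda)$ instead of $1$, then $d_s=f(\lambda)\,P_{B[1,\dots,s]}(\lambda)$, by immediate induction. (b) A variable $x_k$, occupying position $(i_k,j_k)$ with $j_k=i_k-k+1$, enters the recursion only at step $m=i_k$, where it contributes the single term $-x_k\,P_{A[1,\dots,i_k-k]}(\lambda)$.

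Now fix $A\in\widetilde C_{i_1,\dots,i_n}$. From the defining inequalities $j_l\le i_1\le i_l$ one checks that each flanking submatrix $A[1,\dots,i_k-k]$ and $A[i_k+1,\dots,n]$ is free of variables, hence constant. The heart of the argument is the closed form
\[
P_A(\lambda)=P_{A_0}(\lambda)-\sum_{k=1}^{n}x_k\,P_{A[1,\dots,i_k-k]}(\lambda)\,P_{A[i_k+1,\dots,n]}(\lambda),
\]
where $A_0$ is the constant part. The fact that $P_A$ is multilinear with \emph{no} cross terms follows by induction using (b): whenever a variable acts as a multiplier in the recursion it multiplies a variable-free polynomial, so no product of two variables is ever created; the coefficient of $x_k$ is then obtained by feeding the seed $-P_{A[1,\dots,i_k-k]}$ from step $i_k$ into (a) with $B=A[i_k+1,\dots,n]$. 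Granting this, the equivalence is mechanical: $P_A=\lambda^n-\sum_k x_k\lambda^{n-k}$ holds iff $P_{A_0}=\lambda^n$ (condition $(i)$, $A_0$ nilpotent, which is anyway trivially necessary by setting all $x_k=0$) and, for each $k$, $P_{A[1,\dots,i_k-k]}\,P_{A[i_k+1,\dots,n]}=\lambda^{n-k}$. The two factors are monic of degrees $i_k-k$ and $n-i_k$, summing to $n-k$, so by unique factorization in $\mathbb{F}[\lambda]$ their product is a power of $\lambda$ iff each factor is, i.e. iff both submatrices are nilpotent, which are conditions $(ii)$ and $(iii)$ (empty blocks being covered by $P_{\mathrm{empty}}=1$).

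For the membership claim I would run the same coefficient computation on an arbitrary companion ULH matrix, where $x_k$ sits at an a priori unknown $(i_k,j_k)$ with $j_k\le i_k$: its coefficient equals $-P_{A_0[1,\dots,j_k-1]}\,P_{A_0[i_k+1,\dots,n]}$, a product of monic polynomials of degrees $j_k-1$ and $n-i_k$. Matching with $-\lambda^{n-k}$ forces $i_k-j_k=k-1$, so $x_k$ lies on the $(k-1)$-subdiagonal and $A\in\widetilde{\mathcal D}$. To upgrade to $\widetilde{\mathcal C}$ I would show that the absence of cross terms is equivalent to $i_k\ge j_l$ for all $k,l$: a product $x_kx_l$ can be created only when $i_k<j_l$ or $i_l<j_k$ (so that one variable lies inside the leading block the other multiplies), and these cannot hold simultaneously since $i_k<j_l\le i_l$ and $i_l<j_k\le i_k$ are contradictory. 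If, say, $i_k<j_l$, then by (a)--(b) the surviving cross term is a nonzero seed (itself a product of monic characteristic polynomials) times the monic trailing characteristic polynomial, hence nonzero; so a companion matrix must satisfy $i_k\ge j_l$ for all $k,l$. Specializing to $l=1$ gives $i_1\le i_k$ and to $k=1$ gives $j_l\le i_1$, which are exactly the inequalities placing all variables in the $i_1$-block, i.e. $A\in\widetilde{\mathcal C}$. \emph{I expect this non-cancellation step to be the main obstacle:} one must argue that a cross term, once created, cannot be annihilated during propagation, which rests precisely on its having a unique point of creation and on multiplication by the (monic, hence nonzero) trailing characteristic polynomial preserving nonvanishing in the integral domain $\mathbb{F}[\lambda]$.

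Finally, for the redundancy of $(i)$: if the only nonzero entries of the $i_1$-block are $x_1,\dots,x_n$, then $A_0$ has a zero $i_1$-block, so with $I=\{1,\dots,i_1\}$ and $J=\{i_1+1,\dots,n\}$ the block $A_0[J;I]$ vanishes and $A_0$ is block upper triangular. Its first diagonal block $A_0[1,\dots,i_1]$ has a zero last row (within columns $1,\dots,i_1$) and upper-left block $A[1,\dots,i_1-1]$, which is nilpotent by condition $(ii)$ for $k=1$, so $A_0[1,\dots,i_1]$ is nilpotent; its second diagonal block equals $A[i_1+1,\dots,n]$, nilpotent by condition $(iii)$ for $k=1$. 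Hence $P_{A_0}=\lambda^{i_1}\lambda^{n-i_1}=\lambda^n$, so $(i)$ holds automatically and may be omitted.
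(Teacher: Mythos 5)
Your proposal is correct, and its engine coincides with the paper's: your closed form for $P_A(\lambda)$ is exactly the paper's identity~(\ref{PolCharInPolBasis}), your equivalence step (two monic factors of degrees $i_k-k$ and $n-i_k$ whose product must be $\lambda^{n-k}$, hence each a power of $\lambda$) and your omission-of-$(i)$ step (block upper triangular $A_0$ with nilpotent diagonal blocks, the cases $i_1\in\{1,n\}$ absorbed by empty blocks) are the paper's, with your derivation of the closed form via the last-row Hessenberg recursion plus seed-linearity merely repackaging the paper's direct observation that $x_k$ occurs only in the term $-x_k\,P_{A[1,\dots,i_k-k]}(\lambda)\,P_{A[i_k+1,\dots,n]}(\lambda)$. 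The one genuine divergence is the membership claim. The paper argues by two exclusions with explicit witness terms: for $B\in\widetilde{\mathcal H}\setminus\widetilde{\mathcal D}$, two variables on one subdiagonal yield terms $x_r\lambda^{n-k-1}$ and $x_s\lambda^{n-k-1}$; for $B\in\widetilde{\mathcal D}\setminus\widetilde{\mathcal C}$, the cross term $x_1x_s\lambda^{n-1-s}$ survives. You instead match the full coefficient $-P_{A_0[1,\dots,j_k-1]}(\lambda)\,P_{A_0[i_k+1,\dots,n]}(\lambda)$ of each linear monomial $x_k$ against $-\lambda^{n-k}$, which forces $i_k-j_k=k-1$ in one stroke (more quantitative than the paper: it pins each variable to its subdiagonal rather than deriving a contradiction), and then prove the pairwise lemma that $x_kx_l$ has nonzero monic coefficient exactly when the intervals $[j_k,i_k]$ and $[j_l,i_l]$ are disjoint, so a companion matrix needs $\min_k i_k\ge\max_l j_l$; specializing to $l=1$ and $k=1$ recovers the $i_1$-block condition. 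That disjoint-interval obstruction is precisely the argument the paper defers to Theorem~\ref{HMinusCNoSonGeneralizadas} for $\widetilde{\mathcal B}$, so your route unifies the two exclusion steps and anticipates the PB-companion section, at the cost of being longer than the paper's single witness pair $(x_1,x_s)$. You also correctly isolated and discharged the one delicate point --- that a cross term has a unique creation step in the recursion and, being a product of monic polynomials in the integral domain $\mathbb{F}[\lambda]$, cannot be annihilated in propagation --- which the paper handles implicitly with the remark that the term ``can not cancel out''.
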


\begin{proof}
The set of companion ULH matrices  is the same as the set of companion matrices of  $\widetilde{\mathcal{H}}$, so in order to prove that any companion ULH matrix  belongs to $\widetilde{\mathcal{C}}$ it is enough to demonstrate that any matrix of $\widetilde{\mathcal{H}} \setminus \widetilde{\mathcal{C}}$ is not companion.

If $B\in \widetilde{\mathcal{H}} \setminus \widetilde{\mathcal{D}}$ then there are two variables  $x_r$ and $x_s$ on the same $k$-subdiagonal of $B$.  In the characteristic polynomial of $B$  appear the terms $x_r \lambda^{n-k-1}$ and  $x_s\lambda^{n-k-1}$. So $B$ is not   companion.

If $B\in\widetilde{\mathcal{D}} \setminus \widetilde{\mathcal{C}}$ then $x_1$ is the $(i_1,i_1)$ entry of $B$ and there exists some $x_s$  out of the $i_1$-block of $B$. As $x_s$ is  on the $(s-1)$-subdiagonal of $B$ then in the characteristic polynomial of $B$  appears the term $x_1x_s\lambda^{n-1-s}$. So $B$ is not companion.

Now we go to the second part of the theorem. Recall the notation for the characteristic polynomial of a matrix that we introduced in~\ref{NotationCharPol}) at the beginning of Section~\ref{ulHCompanionMatrices}. If $A$ is a matrix of $\widetilde{C}_{i_1,\ldots,i_n}$ then all the terms of   $P_A(\lambda)$ in which  variables $x_1,\ldots,x_n$ do not appear conform  $P_{A_0}(\lambda)$ where $A_0$ is the constant part of $A$.  On the other hand, as $A$ is a ULH matrix and   $x_k$  is the $(i_k,i_k-k+1)$ entry of $A$ then $x_k$ appears in $P_A(\lambda)$ only in the term 
$$
-x_k \, P_{A[1,\ldots,i_k-k]}(\lambda) \, P_{A[i_k+1,\ldots,n]}(\lambda).
$$
Note that  $A[1,\ldots,i_k-k]$  is the empty matrix if $i_k=k$ and $A[i_k+1,\ldots,n]$ is the empty matrix if $i_k=n$. If we add the fact that  the variables $x_1,\ldots,x_n$ are in the $i_1$-block of $A$ then $A[1,\ldots,i_k-k]$ is a submatrix of the constant matrix $A[1,\ldots,i_1-1]$ and $A[i_k+1,\ldots,n]$ is a submatrix of the constant matrix $A[i_1+1,\ldots,n]$. Therefore 
\begin{eqnarray} \label{PolCharInPolBasis}
P_{A}(\lambda) &=& P_{A_0}(\lambda)-
\sum_{k=1}^n x_k \, P_{A[1,\ldots,i_k-k]}(\lambda) \, P_{A[i_k+1,\ldots,n]}(\lambda).  
\end{eqnarray}
Note that if some $P_{A[1,\ldots,i_k-k]}(\lambda) \, P_{A[i_k+1,\ldots,n]}(\lambda)$ contained a variable, then~(\ref{PolCharInPolBasis}) would not be true.

So $A$ is a companion matrix  if and only if equations~(\ref{PolCharInPolBasis}) and~(\ref{PolynomialOfCompanion}) match, that is,   if and only if 
\begin{align*}
\lambda^n&=P_{A_0}(\lambda); \\
\lambda^{n-k}&=P_{A[1,\ldots,i_k-k]}(\lambda)\, P_{A[i_k+1,\ldots,n]}(\lambda) \ \text{ for } k=1,\ldots,n; 
\end{align*}
if and only if 
\begin{align*}
\lambda^n&=P_{A_0}(\lambda); \\
\lambda^{i_k-k}&=P_{A[1,\ldots,i_k-k]}(\lambda) \ \text{ for } k=1,\ldots,n; \\
\lambda^{n-i_k}&=P_{A[i_k+1,\ldots,n]}(\lambda) \ \text{ for } k=1,\ldots,n; 
\end{align*}
if and only if 
\begin{align*}
& A_0 \ \text{ is nilpotent}; \\
& A[1,\ldots,i_k-k] \ \text{ is a constant nilpotent matrix or the empty matrix for } k=1,\ldots,n; 
\\
& A[i_k+1,\ldots,n] \ \text{ is a constant nilpotent matrix or the empty matrix for } k=1,\ldots,n.
\end{align*}

Let us prove the second part. If the only nonzero entries of the $i_1$-block of $A$  are $x_1,\ldots,x_n$ then  
\begin{align*} %\label{A00}
A_0=\left[\begin{array}{cccc}
 A[1,\ldots,i_1-1]  &  {e}_{i_1-1}  &  0 \\ 
 0&  0&    e_1^T \\ 
 0 &  0 &   A[i_1+1,\ldots,n] 
\end{array}\right].    
\end{align*}
So, if  $A$ meets conditions $(ii)$ and $(iii)$ then  $A_0$ is nilpotent in all three possible cases: 
\begin{enumerate}[$(a)$]
    \item If $1<i_1<n$ then  $A[1,\ldots,i_1-1]$ and $A[i_1+1,\ldots,n] $ are nilpotent. So it is $A_0$. 
    \item If $i_1=1$ then $A_0=\begin{smat}
0 & e_1^T \\ 0 & A[2,\ldots, n]
\end{smat}$ and $A[2,\ldots, n]$ is nilpotent. So it is $A_0$.
    \item If $i_1=n$ then $A_0=\begin{smat}
A[1,\ldots, n-1] & e_{n-1} \\ 0 & 0
\end{smat}$ and $A[1,\ldots, n-1]$ is nilpotent. So it is $A_0$.
\end{enumerate}
\end{proof}

\subsection{Nested nilpotent ULH matrices}

Let  $n_1,\ldots,n_s$ be integers with  $0\leq n_1,\ldots,n_s\leq n$.  We will denote by $\mathfrak{N}^n_L(n_1,\ldots,n_s)$ the set of constant ULH matrices of order $n$ such that  for each $h=1,\ldots,s$ with $n_h\neq  0$ the $n_h$-leading principal submatrix is nilpotent. And we will denote by $\mathfrak{N}^n_T(n_1,\ldots,n_s)$ the set of ULH matrices of order $n$ such that  for each $h=1,\ldots,s$ with $n_h\neq  0$ the $n_h$-trailing principal submatrix is nilpotent. By convenience we have included the possibility that the sequence $n_1,\ldots,n_s$ contains repeated elements or elements equal to zero. 

If $J_n=\begin{smat}
\vspace{-2mm} 0 & & 1 \\
 & \reflectbox{$\ddots$} & \\
1 & &  0
\end{smat}$ is the  \emph{backward identity matrix}    then
$$\mathfrak{N}^n_T(n_1,\ldots,n_s)=\big(
J_n\, \mathfrak{N}^n_L(n_1,\ldots,n_s)\, J_n
\big)^T.$$
The transposition is necessary to transform ULH matrices into ULH matrices. An example:
\begin{align*}
A &= \left[\begin{array}{rrrr}
 0 &\z1& 0 & \z 0 \\ \cline{1-1}
 3 &-5& 1 &  \z0 \\
 15&-28& 5 &  \z 1 \\  \cline{1-3}
 0 &0& 0 & 0 
\end{array}\right]\in \mathfrak{N}^4_L(1,3,4)\; \text{  and  }\;
\big(
J_n\, A \, J_n
\big)^T = \left[\begin{array}{rrrr}
 0 & 1 & 0 & 0 \\  \cline{2-4}
 0 & \z \mbox{$5$} & 1 & 0 \\
0 & \z \mbox{$-28$} & -5 & 1 \\ \cline{4-4}
 0 & \z \mbox{$15$} & 3 &  \z 0 
\end{array}\right]\in \mathfrak{N}^4_T(1,3,4).
\end{align*}

Now we will state Theorem~\ref{CCompanion} in terms of nested nilpotent matrices.

\begin{theorem}\label{CompanionInC}
Any   companion ULH matrix  belongs to $\widetilde{\mathcal{C}}$. 
Moreover, if $A \in \widetilde{C}_{i_1,\ldots,i_n}$ then $A$ is a  companion matrix if and only if the following conditions are met:
\begin{enumerate}[$(i)$]
    \item  The constant part of $A$  is a nilpotent matrix;
    \item $A[1,\ldots,i_1-1]$ belongs to $\mathfrak{N}^{i_1-1}_L(i_1-1,i_2-2,\ldots,i_n-n)$;
    \item  $A[i_1+1,\ldots,n]$ belongs to $\mathfrak{N}^{n-i_1}_T(n-i_1,n-i_2,\ldots,n-i_n)$.
\end{enumerate}  
If the only nonzero entries of the $i_1$-block of $A$  are $x_1,\ldots,x_n$ then condition $(i)$ can  be omitted.
\end{theorem}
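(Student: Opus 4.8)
The plan is to reduce Theorem~\ref{CompanionInC} to the already-proved Theorem~\ref{CCompanion}, since the two statements share the claim that any companion ULH matrix belongs to $\widetilde{\mathcal{C}}$ as well as condition $(i)$ and the final sentence about omitting $(i)$. Thus the only genuinely new content is the equivalence, for a matrix $A \in \widetilde{C}_{i_1,\ldots,i_n}$, of conditions $(ii)$ and $(iii)$ here with conditions $(ii)$ and $(iii)$ of Theorem~\ref{CCompanion}. I would state at the outset that it suffices to prove these two reformulations and then invoke Theorem~\ref{CCompanion}.

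For condition $(ii)$, recall that in Theorem~\ref{CCompanion} the requirement is that each $A[1,\ldots,i_k-k]$ be a constant nilpotent matrix or the empty matrix, for $k=1,\ldots,n$. I would first observe that because $A\in\widetilde{C}_{i_1,\ldots,i_n}$, the definition of $\mathcal{C}$ in~(\ref{DefinitionC}) gives $i_k-k+1\leq i_1$, hence $i_k-k\leq i_1-1$, so each index range $1,\ldots,i_k-k$ sits inside $1,\ldots,i_1-1$; moreover the entries of $A[1,\ldots,i_1-1]$ are all constants, since the variables lie in the $i_1$-block and the $(i_1,i_1)$ entry $x_1$ is excluded from the leading principal submatrix $A[1,\ldots,i_1-1]$. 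Therefore $A[1,\ldots,i_k-k]$ is automatically a constant matrix, and it equals the $(i_k-k)$-leading principal submatrix of the constant ULH matrix $M:=A[1,\ldots,i_1-1]$. The Theorem~\ref{CCompanion} condition $(ii)$ then says precisely that, for each $k$, the $(i_k-k)$-leading principal submatrix of $M$ is nilpotent whenever $i_k-k\neq 0$ (the case $i_k-k=0$ giving the empty matrix). Comparing with the definition of $\mathfrak{N}^{i_1-1}_L(i_1-1,i_2-2,\ldots,i_n-n)$, which requires exactly that the $n_h$-leading principal submatrix be nilpotent for every $n_h\neq 0$ in the list $n_h=i_h-h$, I conclude $M\in\mathfrak{N}^{i_1-1}_L(i_1-1,i_2-2,\ldots,i_n-n)$ iff Theorem~\ref{CCompanion}$(ii)$ holds. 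One should note here that for $k=1$ the value is $i_1-1$, so $M$ itself is required to be nilpotent, which is consistent.

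For condition $(iii)$, the argument is the mirror image. Theorem~\ref{CCompanion}$(iii)$ asks that each $A[i_k+1,\ldots,n]$ be constant nilpotent or empty. Again by $A\in\widetilde{C}_{i_1,\ldots,i_n}$ and~(\ref{DefinitionC}), one has $i_1\leq i_k$, so $i_k+1\geq i_1+1$ and the range $i_k+1,\ldots,n$ lies inside $i_1+1,\ldots,n$; these entries are all constants for the same reason as before. Now $A[i_k+1,\ldots,n]$ is the $(n-i_k)$-trailing principal submatrix of the constant ULH matrix $N:=A[i_1+1,\ldots,n]$, and nilpotency of each such trailing submatrix for $n-i_k\neq 0$ is exactly the defining condition of $\mathfrak{N}^{n-i_1}_T(n-i_1,n-i_2,\ldots,n-i_n)$. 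Hence $N\in\mathfrak{N}^{n-i_1}_T(n-i_1,n-i_2,\ldots,n-i_n)$ iff Theorem~\ref{CCompanion}$(iii)$ holds. Combining the two equivalences with Theorem~\ref{CCompanion} and carrying over condition $(i)$ and the omission clause verbatim completes the proof.

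I do not expect a serious obstacle, since the result is a translation between two equivalent bookkeeping schemes rather than a new theorem; the main care required is index bookkeeping, specifically verifying from~(\ref{DefinitionC}) that every range $1,\ldots,i_k-k$ and $i_k+1,\ldots,n$ genuinely lands inside the constant blocks $A[1,\ldots,i_1-1]$ and $A[i_1+1,\ldots,n]$ respectively, and matching the indices $i_k-k$ and $n-i_k$ to the argument lists of $\mathfrak{N}_L$ and $\mathfrak{N}_T$. The only subtlety to flag is the treatment of the empty-matrix and zero-index cases: when $i_k-k=0$ (or $n-i_k=0$) the Theorem~\ref{CCompanion} clause yields the empty matrix, which corresponds precisely to the convention built into $\mathfrak{N}_L$ and $\mathfrak{N}_T$ that entries $n_h=0$ impose no nilpotency constraint, so the two formulations remain in agreement on these degenerate cases.
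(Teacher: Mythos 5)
Your proposal is correct and takes essentially the same approach as the paper: the published proof likewise reduces everything to Theorem~\ref{CCompanion} and observes, via the inequalities $0\leq i_k-k\leq i_1-1$ and $i_1\leq i_k\leq n$ drawn from~(\ref{DefinitionC}), that conditions $(ii)$ and $(iii)$ of the two theorems coincide. Your write-up is simply more explicit than the paper about why the blocks $A[1,\ldots,i_1-1]$ and $A[i_1+1,\ldots,n]$ are constant and about the empty-matrix/zero-index conventions built into $\mathfrak{N}_L$ and $\mathfrak{N}_T$, details the paper leaves implicit.
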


\begin{proof} It is enough to prove that conditions (ii) and (iii) in Theorems~\ref{CCompanion}~and~\ref{CompanionInC} are equal. Taking into account~(\ref{DefinitionC}) we conclude that $0\leq i_k-k\leq i_1-1$ for each $k=1,\ldots,n$ and so conditions (ii) in both theorems are equal. Again, taking into account~(\ref{DefinitionC}) we conclude that $i_1\leq i_k\leq n$ for each $k=1,\ldots,n$ and so conditions (iii) in both theorems are equal. 
\end{proof}

\medskip

In Section~\ref{DescripcionCompanions} we will show how to parametrize   the sets  $\mathfrak{N}^n_L(n_1,\ldots,n_s)$ and  $\mathfrak{N}^n_T(n_1,\ldots,n_s)$. Previously we analyze a specially simple case.  

\begin{lemma} \label{MLHNilpotentAnidadas1a1}
$\mathfrak{N}^n_L(1,\ldots,n)=\mathfrak{N}^n_T(1,\ldots,n)=\{U_n\}$, where $U_n$ is the upper shift matrix with ones on the superdiagonal and zeroes elsewhere. 
\end{lemma}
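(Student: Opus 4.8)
The plan is to prove both equalities by first settling the leading case $\mathfrak{N}^n_L(1,\ldots,n)=\{U_n\}$ and then deducing the trailing case from the identity $\mathfrak{N}^n_T(n_1,\ldots,n_s)=\big(J_n\,\mathfrak{N}^n_L(n_1,\ldots,n_s)\,J_n\big)^T$ recorded just above. The inclusion $\{U_n\}\subseteq\mathfrak{N}^n_L(1,\ldots,n)$ is immediate, since the $k$-leading principal submatrix of $U_n$ is $U_k$, which is nilpotent because $U_k^k=0$. The content is therefore the reverse inclusion.

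For the reverse inclusion I would argue by induction on $k$ that if $A$ is a constant ULH matrix of order $n$ whose leading principal submatrices $A_k:=A[1,\ldots,k]$ are all nilpotent, then $A_k=U_k$ for every $k$; taking $k=n$ yields $A=U_n$. The base case $k=1$ is clear, as a nilpotent $1\times 1$ matrix must be $[0]=U_1$. For the inductive step, assume $A_{k-1}=U_{k-1}$. Since $A$ is ULH, its entries above the superdiagonal vanish and its $(k-1,k)$ entry equals $1$, so $A_k$ has the block form $\begin{smat} U_{k-1} & e_{k-1}\\ v^{T} & a_{kk}\end{smat}$ with $v^{T}=(a_{k,1},\ldots,a_{k,k-1})$.

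The key step is the characteristic polynomial of this $A_k$. Using the Schur-complement determinant formula together with $\det(\lambda I_{k-1}-U_{k-1})=\lambda^{k-1}$ and the closed form $(\lambda I_{k-1}-U_{k-1})^{-1}=\sum_{j=0}^{k-2}\lambda^{-(j+1)}U_{k-1}^{\,j}$ (whose last column has $r$-th entry $\lambda^{-(k-r)}$), one obtains
$$
P_{A_k}(\lambda)=\lambda^{k}-a_{kk}\lambda^{k-1}-\sum_{r=1}^{k-1}a_{k,r}\,\lambda^{\,r-1};
$$
equivalently this is the standard cofactor recurrence for the characteristic polynomials of a lower Hessenberg matrix with unit superdiagonal. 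Nilpotency of $A_k$ forces $P_{A_k}(\lambda)=\lambda^{k}$, so all the coefficients $a_{k,1},\ldots,a_{k,k-1},a_{kk}$ vanish; that is, the entire $k$-th row of $A_k$ is zero and $A_k=U_k$, completing the induction. This characteristic-polynomial identity is the only real computation and the main obstacle; everything else is bookkeeping.

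Finally, for the trailing set I would invoke the displayed identity with $\mathfrak{N}^n_L(1,\ldots,n)=\{U_n\}$. The bijection $X\mapsto\big(J_nXJ_n\big)^{T}$ sends $U_n$ to itself, because conjugation by $J_n$ turns the superdiagonal into the subdiagonal and the subsequent transposition returns it to the superdiagonal; hence $\mathfrak{N}^n_T(1,\ldots,n)=\big\{(J_nU_nJ_n)^{T}\big\}=\{U_n\}$, which finishes the proof.
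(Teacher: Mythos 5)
Your proof is correct and follows essentially the same route as the paper: induction showing each leading principal submatrix must be $U_k$, with nilpotency forcing the new bottom row to vanish, and the trailing case deduced via the identity $\mathfrak{N}^n_T=\big(J_n\,\mathfrak{N}^n_L\,J_n\big)^T$ together with $(J_nU_nJ_n)^T=U_n$. The only difference is cosmetic: you spell out via a Schur-complement computation the characteristic polynomial $P_{A_k}(\lambda)=\lambda^k-a_{kk}\lambda^{k-1}-\sum_{r=1}^{k-1}a_{k,r}\lambda^{r-1}$, a step the paper leaves implicit when it asserts that nilpotency kills the last row.
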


\begin{proof}
We will prove by induction that $\mathfrak{N}^n_L(1,\ldots,n)=\{U_n\}$. For $n=1$, $\mathfrak{N}^1_L(1)=\{[\, 0 \,]\}=\{U_1\}$. Assume that it is true for $k$, that is, $\mathfrak{N}^k_L(1,\ldots,k)=\{U_k\}$. Let $A$ be a matrix of $\mathfrak{N}^{k+1}_L(1,\ldots,k+1)$. As $A[1,\ldots,k]\in \mathfrak{N}^{k}_L(1,\ldots,k)$ then $A[1,\ldots,k]=U_{k}$. As  $A$ is a ULH matrix then 
$$
A=\left[\begin{array}{cccccc}
& & & \multicolumn{1}{|r} 0 \quad & \\
& U_{k} & & \multicolumn{1}{|r} \vdots \quad \\
&  & & \multicolumn{1}{|r} 0 \quad \\ 
& & & \multicolumn{1}{|r} 1 \quad \\ \cline{1-3}
\, a_{k+1,1} & \cdots & a_{k+1,k} & a_{k+1,k+1} \
\end{array}\right].
$$
As $A$ is nilpotent then $a_{k+1,1}=\cdots=a_{k+1,k+1}=0$. So  $\mathfrak{N}^{k+1}_L(1,\ldots,k+1)=\{U_{k+1}\}$.

On the other hand, $$\mathfrak{N}^n_T(1,\ldots,n)=(J_n \, \mathfrak{N}^n_L(n_1,\ldots,n_r) \, J_n)^T=\{(J_n \,  U_n  \, J_n)^T\}=\{U_n\}.$$ 
\end{proof}

\subsection{Description  of the companions of $\widetilde{\mathcal{G}}$}

Even though Theorems~\ref{CCompanion} and ~\ref{CompanionInC} characterize  the companion matrices of $\widetilde{\mathcal{C}}$, we can give a nice and much easier to check description of the companion matrices of the subset $\widetilde{\mathcal{G}}$ of $\widetilde{\mathcal{C}}$. 

First we define the sets $\widehat{\mathcal{C}}$, $\widehat{\mathcal{G}}$, and $\widehat{\mathcal{F}}$  composed by those matrices  of, respectively, $\widetilde{\mathcal{C}}$, $\widetilde{\mathcal{G}}$, and $\widetilde{\mathcal{F}}$ such that all its nonzero entries are either in the superdiagonal (and are equal to 1)  or in the $i_1$-block determined by the position of the variable $x_1$. Note that $\widehat{\mathcal{C}}\supsetneq \widehat{\mathcal{G}} \supsetneq \widehat{\mathcal{F}}$. 

This  structure makes that if $A$ is a matrix of $\widehat{\mathcal{C}}$ then     
\begin{equation}\label{CharPolyCB}
  P_{A}(\lambda)=\lambda^n - s_0(A) \lambda^{n-1}-\cdots -s_{n-2}(A)\lambda-s_{n-1}(A)
\end{equation}
where $s_k(A)$ is the sum of the entries on the $k$-subdiagonal of $A$. Matching~(\ref{CharPolyCB}) with~(\ref{PolynomialOfCompanion}) we obtain the following characterization of the companion matrices of $\widehat{\mathcal{C}}$.

\begin{lemma}  \label{CompanionCB}
$A\in \widehat{\mathcal{C}}$ is companion  if and only if $s_{k-1}(A)=x_{k}$ for   $k=1,\ldots,n$.
\end{lemma}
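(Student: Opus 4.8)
The plan is to prove the formula~(\ref{CharPolyCB}) for the characteristic polynomial and then read off the companion condition directly. The key observation is that a matrix $A\in\widehat{\mathcal{C}}$ has a very restricted structure: its only nonzero entries lie either on the superdiagonal (where they equal $1$) or inside the $i_1$-block $A[i_1,\ldots,n;1,\ldots,i_1]$. Since $A$ is ULH, no entries sit strictly above the superdiagonal, and by the defining property of $\widehat{\mathcal{C}}$ every entry below the superdiagonal that is nonzero must fall in the $i_1$-block. I would first make this structural picture explicit and then compute $P_A(\lambda)=\det(\lambda I_n-A)$ from it.

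First I would establish~(\ref{CharPolyCB}), namely that $P_A(\lambda)=\lambda^n-\sum_{k=0}^{n-1}s_k(A)\lambda^{n-1-k}$. The natural route is a permutation-expansion of the determinant of $\lambda I_n - A$. Each term of the determinant corresponds to a permutation $\sigma$, and contributes a product of entries $(\lambda I_n - A)_{i,\sigma(i)}$. Because $A$ is ULH, the superdiagonal entries contribute factors of $-1$ and the diagonal entries contribute $\lambda$ minus the diagonal entry; all entries strictly above the superdiagonal vanish. The crucial point is that, by the $\widehat{\mathcal{C}}$ structure, any single nonzero subdiagonal entry $a_{i,j}$ (lying in the $i_1$-block, on the $(i-j)$-subdiagonal) can only be completed to a nonzero determinant term by pairing it with a consecutive run of superdiagonal ones and otherwise diagonal $\lambda$'s. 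I expect the clean way to see this is that the digraph of $A$ restricted to the relevant vertices forces each nonzero permutation term to be either the all-diagonal term (giving $\lambda^n$) or a term using exactly one subdiagonal entry $a_{i,j}$ on the $k$-subdiagonal together with $k$ superdiagonal ones, contributing $-a_{i,j}\lambda^{n-1-k}$. Summing over all entries on the $k$-subdiagonal yields the coefficient $-s_k(A)$, and summing over $k$ gives~(\ref{CharPolyCB}).

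The main obstacle will be justifying rigorously that no permutation term uses two or more subdiagonal entries, i.e.\ that there are no ``crossed'' contributions producing products like $a_{i,j}a_{i',j'}$. This is where the $\widehat{\mathcal{C}}$ hypothesis does the real work: all subdiagonal nonzeros are confined to the $i_1$-block, so any attempt to use two of them in a single permutation would have to route through vertices outside the block, but the only available off-block edges are the superdiagonal ones, and these form a single chain that cannot accommodate two disjoint ``descents'' without revisiting a row or column. I would argue this combinatorially on the associated digraph: a nonzero determinant term corresponds to a disjoint union of cycles covering all vertices, and the constrained edge set of $\widehat{\mathcal{C}}$ admits only the trivial all-loops cover (giving $\lambda^n$) or covers containing a single nontrivial cycle that uses one $i_1$-block edge plus a path of superdiagonal edges.

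Once~(\ref{CharPolyCB}) is in hand, the conclusion is immediate and requires no further computation. A companion matrix must have $P_A(\lambda)=\lambda^n-x_1\lambda^{n-1}-\cdots-x_{n-1}\lambda-x_n$ as in~(\ref{PolynomialOfCompanion}). Comparing this with~(\ref{CharPolyCB}) coefficient by coefficient, the coefficient of $\lambda^{n-1-(k-1)}=\lambda^{n-k}$ forces $s_{k-1}(A)=x_k$ for each $k=1,\ldots,n$. Conversely, if $s_{k-1}(A)=x_k$ for all $k$, then substituting into~(\ref{CharPolyCB}) reproduces~(\ref{PolynomialOfCompanion}) exactly, so $A$ is companion. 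This gives the stated equivalence, completing the proof.
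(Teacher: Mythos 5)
Your proposal is correct and takes essentially the same route as the paper: the paper's proof consists precisely of asserting the expansion~(\ref{CharPolyCB}) from the structure of $\widehat{\mathcal{C}}$ and then matching it coefficient-by-coefficient with~(\ref{PolynomialOfCompanion}), exactly as you do, with your cycle-cover argument supplying the justification of~(\ref{CharPolyCB}) that the paper leaves implicit. One point worth sharpening in your combinatorial step: a nontrivial cycle through a block entry $a_{i,j}$ must return from $j$ to $i$ along the superdiagonal and hence covers exactly the interval $\{j,\ldots,i\}$, which contains $i_1$ because $j\le i_1\le i$ for every $i_1$-block position; therefore two such cycles can never be vertex-disjoint, which is the precise reason no term with two subdiagonal entries survives (the superdiagonal chain alone would happily accommodate two disjoint descents).
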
 

Let us see how to use this result to characterize the companion matrices of  $\widetilde{\mathcal{G}}$. 

\begin{theorem} \label{CompanionsInG}
$A\in \widetilde{\mathcal{G}}$ is  companion  if and only if $A\in  \widehat{\mathcal{G}}$ and $s_{k-1}(A)=x_{k}$ for  $k=1,\ldots,n$.
\end{theorem}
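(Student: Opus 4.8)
The plan is to prove the two implications separately, disposing of the backward one as an immediate consequence of the machinery already in place and concentrating the real work on the forward one.

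For the \emph{if} direction, suppose $A\in\widehat{\mathcal{G}}$ and $s_{k-1}(A)=x_k$ for $k=1,\ldots,n$. Since $\widehat{\mathcal{G}}\subseteq\widehat{\mathcal{C}}$, I would simply invoke Lemma~\ref{CompanionCB}, which says that a matrix of $\widehat{\mathcal{C}}$ is companion precisely when its subdiagonal sums satisfy $s_{k-1}=x_k$. This yields that $A$ is companion with no further argument.

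The substantial content is the \emph{only if} direction: I must show that a companion matrix $A\in\widetilde{\mathcal{G}}$ can carry no nonzero constant below the superdiagonal outside its $i_1$-block, i.e. that $A\in\widehat{\mathcal{G}}$. Write $A\in\widetilde{C}_{i_1,\ldots,i_n}$. The lower-triangular (below-superdiagonal) entries of $A$ split cleanly into three regions: the leading submatrix $A[1,\ldots,i_1-1]$, the $i_1$-block, and the trailing submatrix $A[i_1+1,\ldots,n]$; membership in $\widehat{\mathcal{G}}$ is exactly the assertion that the first and third regions vanish, that is, $A[1,\ldots,i_1-1]=U_{i_1-1}$ and $A[i_1+1,\ldots,n]=U_{n-i_1}$. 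Since $A$ is companion, Theorem~\ref{CompanionInC} supplies conditions $(ii)$ and $(iii)$,
\[ A[1,\ldots,i_1-1]\in\mathfrak{N}^{i_1-1}_L(i_1-1,i_2-2,\ldots,i_n-n),\qquad A[i_1+1,\ldots,n]\in\mathfrak{N}^{n-i_1}_T(n-i_1,n-i_2,\ldots,n-i_n). \]

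Here is where the defining property of $\mathcal{G}$ enters, and this is the step I expect to be the crux. The gather conditions read $\{i_1,\ldots,i_n\}=\{i_1,\ldots,n\}$ and $\{i_1,i_2-1,\ldots,i_n-n+1\}=\{1,\ldots,i_1\}$. Subtracting indices, the first gives $\{n-i_k:k\}=\{0,1,\ldots,n-i_1\}$ and the second gives $\{i_k-k:k\}=\{0,1,\ldots,i_1-1\}$. Hence the parameter sequences appearing in $(ii)$ and $(iii)$ each run through the full consecutive range $1,\ldots,i_1-1$ (respectively $1,\ldots,n-i_1$), so every leading principal submatrix of $A[1,\ldots,i_1-1]$ up to order $i_1-1$, and every trailing principal submatrix of $A[i_1+1,\ldots,n]$ up to order $n-i_1$, is nilpotent. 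By Lemma~\ref{MLHNilpotentAnidadas1a1} this forces $A[1,\ldots,i_1-1]=U_{i_1-1}$ and $A[i_1+1,\ldots,n]=U_{n-i_1}$, so $A\in\widehat{\mathcal{G}}$. Finally, as $A\in\widehat{\mathcal{G}}\subseteq\widehat{\mathcal{C}}$ is companion, Lemma~\ref{CompanionCB} yields $s_{k-1}(A)=x_k$ for $k=1,\ldots,n$, completing the argument. The one point I would be careful to verify is that adjoining the extra index $0$ and possible repetitions to these sequences does not weaken the conclusion; this holds because $\mathfrak{N}$ ignores zero indices while additional indices only impose further nilpotency constraints, so the enlarged set is still contained in $\mathfrak{N}^{i_1-1}_L(1,\ldots,i_1-1)=\{U_{i_1-1}\}$ and $\mathfrak{N}^{n-i_1}_T(1,\ldots,n-i_1)=\{U_{n-i_1}\}$.
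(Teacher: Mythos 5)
Your proof is correct and follows essentially the same route as the paper: sufficiency via Lemma~\ref{CompanionCB}, and necessity by combining conditions $(ii)$--$(iii)$ of Theorem~\ref{CompanionInC} with the gather conditions of $\mathcal{G}$ to see that the nilpotency indices exhaust $\{1,\ldots,i_1-1\}$ and $\{1,\ldots,n-i_1\}$, whence Lemma~\ref{MLHNilpotentAnidadas1a1} forces $A[1,\ldots,i_1-1]=U_{i_1-1}$ and $A[i_1+1,\ldots,n]=U_{n-i_1}$. Your explicit index bookkeeping (and the remark that zero and repeated indices are harmless) just spells out what the paper states tersely, and you even use the correct subscript $\mathfrak{N}^{i_1-1}_L$ for the leading block where the paper's proof has a typographical $\mathfrak{N}^{i_1-1}_T$.
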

\begin{proof}
The sufficiency follows from   Lemma~\ref{CompanionCB}. 

The necessity. By Lemma~\ref{CompanionCB}, it is enough to prove that $A\in  \widehat{\mathcal{G}}$. Note that $A\in \widetilde{G}_{i_1,\ldots,i_n}$ for some $i_1,\ldots,i_n$. Then there exists at least one variable on each of the rows $i_1,i_1+1,\ldots,n$ and one variable on each of the columns $1,\ldots,i_1$.  From Theorem~\ref{CompanionInC} and Lemma~\ref{MLHNilpotentAnidadas1a1}  it follows that  
\begin{align*}
A[1,\ldots,i_1-1]&\in\mathfrak{N}^{i_1-1}_T(1,\ldots,i_1-1)=\{U_{i_1-1}\}; \text{ and } \\ A[i_1+1,\ldots,n]&\in\mathfrak{N}^{n-i_1}_T(1,\ldots,n-i_1)=\{U_{n-i_1}\}. 
\end{align*}
So  $A$ meets the conditions to be a matrix  of $\widehat{\mathcal{G}}$. 
\end{proof}

The interest of Theorem~\ref{CompanionsInG} is that we are able to parametrize all the ULH superpatterns of a considerable number of matrices of $\mathcal{C}$. Now we give an example of such a parametrization.

\begin{example}
Consider the matrix ${G}_{4,4,6,5,5,7,7}\in \mathcal{G}$. According to  Theorem~\ref{CompanionsInG} the parameterization  of the set of all the  companion matrices  within $\widetilde{G}_{4,4,6,5,5,7,7}$ is
$$
\left[\begin{array}{cccccccccccccc}
 0   & 1 &  0  & 0 &  0 & 0 & 0 \\
 0   & 0 &  1  & 0 &  0 & 0 & 0 \\
 0   & 0 &  0  & 1 &  0 & 0 & 0 \\  
 b   & a &  x_2  & x_1 & 1 & 0 & 0 \\ 
 x_5 & x_4 & 1-a   & 0 & 0 & 1 & 0 \\
 0   & d & c   & x_3 & 0 & 0 & 1  \\
 x_7 & x_6 & 1-d   & 1-b-c & 0 & 0 & 0 \\
\end{array}\right].$$
\end{example}

A consequence of Theorem~\ref{CompanionsInG} is related with an open question  stated by Deaett et al.~\cite{Deaett-Fischer-Garnett-VanderMeulen}:  ``\emph{We wonder if, in producing a companion matrix by changing some zero entries of a Fiedler companion matrix $F_{i_1,\ldots,i_n}$ by nonzero constants, the extra nonzero entries are always restricted to the submatrix corresponding to the $i_1$-block}". They partially confirmed that supposition.

\begin{theorem} \label{FiedlerConjecture}  \cite[Theorem 5.4]{Deaett-Fischer-Garnett-VanderMeulen}
Let $A$ be a matrix  obtained from the Fiedler companion matrix $F_{i_1,\ldots,i_n}$ by changing  zero entries  that are not in the  $i_1$-block. Then $A$ is not companion. 
\end{theorem}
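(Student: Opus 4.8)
The plan is to argue by contradiction. Assume $A$ is companion. Changing zero entries does not move the variable positions, so $A$ still carries $x_1,\ldots,x_n$ exactly where $F_{i_1,\ldots,i_n}$ does; in particular $x_1$ remains on the diagonal at $(i_1,i_1)$ and the $i_1$-block of $A$ is the same submatrix as for $F_{i_1,\ldots,i_n}$. Let $(p,q)$ be an altered position, which by hypothesis lies outside that block. I would split the argument according to whether all the altered positions lie below the superdiagonal (so that $A$ is still ULH) or some altered position lies strictly above it.

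First case, every altered entry is below the superdiagonal, so $A$ is a ULH matrix. Since $F_{i_1,\ldots,i_n}=G_{i_1,\ldots,i_n}\in\mathcal{G}$ and $A$ is produced from it by turning below-superdiagonal zeros into nonzero constants, $A$ is a ULH superpattern of $G_{i_1,\ldots,i_n}$, that is $A\in\widetilde{G}_{i_1,\ldots,i_n}\subseteq\widetilde{\mathcal{G}}$. Now Theorem~\ref{CompanionsInG} applies verbatim: were $A$ companion it would lie in $\widehat{\mathcal{G}}$, forcing every nonzero entry of $A$ onto the superdiagonal or into the $i_1$-block. This contradicts the presence of a nonzero constant at the altered position $(p,q)$, which is below the superdiagonal and outside the block. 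So in this case $A$ is not companion, and this is the part that follows immediately from Theorem~\ref{CompanionsInG}.

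Second case, some altered position $(p,q)$ lies strictly above the superdiagonal, i.e. $q\geq p+2$ (any such position is automatically outside the $i_1$-block, all of whose entries lie on or below the main diagonal), so $A$ is no longer ULH and Theorem~\ref{CompanionsInG} does not apply. Here I would isolate the effect of this single entry on $P_A(\lambda)$. Writing $\lambda I_n-A=\lambda I_n-F_{i_1,\ldots,i_n}-\sum_s c_s E_{s}$, where the $E_s$ are the matrix units at the altered positions and $c_s\neq 0$ are the new values, the determinant is multilinear in the $c_s$, each occurring to degree at most one since each $E_s$ meets a single row. If $A$ is companion then $P_A(\lambda)$ does not depend on the $c_s$, so every coefficient of a positive-degree $c$-monomial vanishes; extracting the coefficient of $c_{(p,q)}$ alone yields, up to sign, the determinant of the submatrix of $\lambda I_n-F_{i_1,\ldots,i_n}$ obtained by deleting row $p$ and column $q$. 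It therefore suffices to show that this minor is not the zero polynomial.

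The non-vanishing of that minor is the step I expect to be the main obstacle, and it is where the Fiedler structure must enter. Setting all variables to zero turns $F_{i_1,\ldots,i_n}$ into the upper shift matrix, for which the minor does vanish whenever $q>p$; hence the relevant term cannot be read off from the constant part, and the return path from column $q$ to row $p$ must pass through variable entries. The natural candidate is the Hamiltonian cycle $1\to 2\to\cdots\to n\to 1$ of $F_{i_1,\ldots,i_n}$ (the superdiagonal together with the entry $x_n$ at $(n,1)$), rerouted so as to avoid row $p$ and column $q$: this produces a permutation whose term is a nonzero monomial carrying a genuine variable. The hard part will be to exhibit, along the lattice path of the Fiedler matrix, such a term whose variable content and $\lambda$-degree cannot be reproduced by any other permutation, so that it survives without cancellation. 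Once the minor is known to be a nonzero polynomial, the extracted coefficient is nonzero, $P_A(\lambda)$ genuinely involves $c_{(p,q)}$, and $A$ cannot be companion.
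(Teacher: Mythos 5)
Your Case 1 is correct and coincides exactly with what this paper actually establishes: for alterations below the superdiagonal the matrix stays ULH, lands in $\widetilde{\mathcal{F}}\subseteq\widetilde{\mathcal{G}}$, and Theorem~\ref{CompanionsInG} forces any companion matrix there into $\widehat{\mathcal{G}}$, which excludes nonzero constants outside the $i_1$-block. This is precisely the paper's Theorem~\ref{SupperpFCoincBelowDiag}. But be aware that the paper offers no proof of Theorem~\ref{FiedlerConjecture} itself: it is imported verbatim from Deaett et al.\ as their Theorem 5.4, and the paper explicitly flags the above-superdiagonal situation as beyond its methods (it even states that the mixed case, block entries plus above-superdiagonal entries, remains open, with only computational evidence for $n\leq 7$). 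So everything of substance in your Case 2 is territory this paper deliberately does not enter, and your treatment of it has a genuine gap.

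Concretely, two things fail in Case 2. First, the coefficient-extraction step is not valid as stated: the $c_s$ are fixed elements of $\mathbb{F}$, not independent indeterminates, so from the identity $P_A(\lambda)=\lambda^n-x_1\lambda^{n-1}-\cdots-x_n$ you may only conclude that the \emph{sum} of all deviation terms vanishes as a polynomial in $\lambda,x_1,\ldots,x_n$; you cannot isolate the coefficient of $c_{(p,q)}$ alone, since cancellation among different $c$-monomials carrying the same $(\lambda,x)$-monomial is possible for specific constant values. (Note the deviation equals the sum of permutation terms using at least one altered entry, because $P_{F_{i_1,\ldots,i_n}}$ already equals the target polynomial.) Second, even after reducing to a single altered entry, the non-vanishing of the relevant minor of $\lambda I_n-F_{i_1,\ldots,i_n}$, with a variable-carrying monomial that no other permutation can cancel, is exactly the crux of the theorem, and you explicitly defer it (``the hard part will be to exhibit\dots''). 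As you yourself observe, the minor of the constant part vanishes for $q>p$, so there is no cheap degree or leading-term argument; the surviving term must be extracted from the lattice-path structure of the Fiedler matrix, and without that argument the proof is incomplete. To repair it you would either need to carry out that combinatorial analysis (this is in effect what Deaett et al.\ do) or find a way to make the deviation terms separate, e.g.\ by comparing coefficients of carefully chosen monomials $x_k\lambda^j$ that only the $(p,q)$-term can produce.
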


We contribute solving the case in which  the  change of zero entries in the  Fiedler companion matrix is  below the superdiagonal. Indeed, this is particular case of  Theorem~\ref{CompanionsInG}: \emph{``$A\in \widetilde{\mathcal{F}}$ is companion  if and only if $A\in  \widehat{\mathcal{F}}$ and $s_{k-1}(A)=x_{k}$ for  $k=1,\ldots,n$"}. We write this in the same language as Theorem~\ref{FiedlerConjecture}.

\begin{theorem} \label{SupperpFCoincBelowDiag}
Let $A$ be a matrix  obtained from the Fiedler companion matrix $F_{i_1,\ldots,i_n}$ by changing  zero entries  that are below the superdiagonal. Then $A$ is companion  if and only if $A$ is obtained by only changing  zero entries that are in the  $i_1$-block and $s_{k-1}(A)=x_{k}$ for  $k=1,\ldots,n$.
\end{theorem}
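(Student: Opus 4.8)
The statement is essentially a repackaging of the italicized special case of Theorem~\ref{CompanionsInG} into the language of Theorem~\ref{FiedlerConjecture}, so the plan is to reduce to the already-proven $\mathcal{F}$-version of Theorem~\ref{CompanionsInG} and then translate the two characterizing conditions into the ``only changing $i_1$-block zeros'' formulation. First I would observe that a matrix $A$ obtained from $F_{i_1,\ldots,i_n}$ by changing only zero entries below the superdiagonal is exactly a ULH superpattern of $F_{i_1,\ldots,i_n}$, that is, $A\in\widetilde{F}_{i_1,\ldots,i_n}\subseteq\widetilde{\mathcal{F}}$. This is immediate from the definition of ULH superpattern in item~o) of the notation list, since changing a zero below the superdiagonal to a nonzero constant is precisely the allowed operation, and the superdiagonal ones and the variable positions of the Fiedler matrix are left untouched.

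Next I would invoke the $\mathcal{F}$-restricted form of Theorem~\ref{CompanionsInG}, stated in the text just before this theorem: for $A\in\widetilde{\mathcal{F}}$, $A$ is companion if and only if $A\in\widehat{\mathcal{F}}$ and $s_{k-1}(A)=x_k$ for $k=1,\ldots,n$. The whole argument then comes down to showing that the condition ``$A\in\widehat{\mathcal{F}}$'' is the same as ``$A$ is obtained from $F_{i_1,\ldots,i_n}$ by changing only zero entries that lie in the $i_1$-block.'' By the definition of $\widehat{\mathcal{F}}$, a matrix $A\in\widetilde{\mathcal{F}}$ lies in $\widehat{\mathcal{F}}$ exactly when all its nonzero entries are either on the superdiagonal (equal to $1$) or inside the $i_1$-block. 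Since the Fiedler matrix $F_{i_1,\ldots,i_n}$ already has this property — its variables form a lattice path inside the $i_1$-block and its only other nonzero entries are the superdiagonal ones — any extra nonzero entries introduced by the superpattern operation are the changed zeros, and $A\in\widehat{\mathcal{F}}$ holds precisely when every such changed zero sits in the $i_1$-block. This gives the desired equivalence between the two phrasings of the first condition, while the sum condition $s_{k-1}(A)=x_k$ is carried over verbatim.

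I would close by assembling the two directions. For sufficiency, if $A$ is obtained by changing only $i_1$-block zeros and satisfies $s_{k-1}(A)=x_k$, then $A\in\widehat{\mathcal{F}}$ and the sum condition hold, so $A$ is companion by the quoted result. For necessity, if $A\in\widetilde{F}_{i_1,\ldots,i_n}$ is companion, the same result forces $A\in\widehat{\mathcal{F}}$ and $s_{k-1}(A)=x_k$; the membership $A\in\widehat{\mathcal{F}}$ then says exactly that the only changed zeros lie in the $i_1$-block.

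The only point requiring care — and what I expect to be the one genuine obstacle — is verifying cleanly that the Fiedler matrix itself already belongs to $\widehat{\mathcal{F}}$ (so that the superpattern operation adds no spurious out-of-block nonzeros of its own) and that the set equality $\widehat{\mathcal{F}}=\{A\in\widetilde{\mathcal{F}}:\text{changed zeros all in the }i_1\text{-block}\}$ holds with the correct bookkeeping of which positions count as ``$i_1$-block'' versus ``superdiagonal.'' This is a matter of matching the definitions of $\mathcal{F}$, $\widetilde{\mathcal{F}}$, and $\widehat{\mathcal{F}}$ rather than any substantive computation, so the theorem follows almost entirely from the already-established machinery.
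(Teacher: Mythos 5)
Your proposal is correct and follows essentially the same route as the paper, which likewise obtains this theorem as the $\widetilde{\mathcal{F}}$-restriction of Theorem~\ref{CompanionsInG} and then rephrases the membership $A\in\widehat{\mathcal{F}}$ as ``the changed zero entries all lie in the $i_1$-block.'' Your extra care in checking that $F_{i_1,\ldots,i_n}$ itself belongs to $\widehat{\mathcal{F}}$, so that the superpattern operation introduces no out-of-block nonzeros beyond the changed zeros, is exactly the definitional bookkeeping the paper leaves implicit.
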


It remains unknown if exists a companion matrix which is obtained from some Fiedler companion matrix $F_{i_1,\ldots,i_n}$ by  changing at least one zero entry of  the  $i_1$-block and at least one zero entry above the superdiagonal. We have computational evidence that this is not the case for  $n\leq 7$.
 
\subsection{Parameterization  of the companions of $\widetilde{\mathcal{C}}\setminus \widetilde{\mathcal{G}}$} \label{DescripcionCompanions}

When we have a matrix $C_{i_1,\ldots,i_n}\in \mathcal{C}\setminus \mathcal{G}$ the parametrization of the set of all the companion matrices within $\widetilde{C}_{i_1,\ldots,i_n}$ is not so easy as with matrices of $\mathcal{G}$. We will show it  with an specific example. Let $$
C_{7,7,9,7,8,9,10,8,9,10}=\left[\begin{array}{cccccccccccccc}
0 & 1 & 0 & 0   & 0 &  0  & \multicolumn{1}{|r}0 &  \multicolumn{1}{|r}0 & 0 & 0  \\
0 & 0 & 1 & 0   & 0 &  0  & \multicolumn{1}{|r}0 &  \multicolumn{1}{|r}0 & 0 & 0  \\
0 & 0 & 0 & 1   & 0 &  0  & \multicolumn{1}{|r}0 &  \multicolumn{1}{|r}0 & 0 & 0  \\
0 & 0 & 0 & 0   & 1 &  0  & \multicolumn{1}{|r}0 &  \multicolumn{1}{|r}0 & 0 & 0 \\
0 & 0 & 0 & 0   & 0 &  1  & \multicolumn{1}{|r}0 &  \multicolumn{1}{|r}0 & 0 & 0 \\
0 & 0 & 0 & 0   & 0 &  0  & \multicolumn{1}{|r}1 &  \multicolumn{1}{|r}0 & 0 & 0 \\  \hline
0 & 0 & 0 & x_4 & 0 & x_2 & x_1 & \multicolumn{1}{|r}1 & 0 & 0   \\ \cline{8-10}
x_8 & 0 & 0 & x_5   & 0 & 0   & 0 & \multicolumn{1}{|r}0 & 1 & 0 \\
x_9 & 0 & 0 & x_6   & 0 & 0   & x_3 & \multicolumn{1}{|r}0 & 0 & 1  \\
x_{10} & 0 & 0 & x_7   & 0 & 0   & 0 & \multicolumn{1}{|r}0 & 0 & 0 \\
\end{array}\right]=
\left[\begin{array}{cccccc}
& U_6 & & \multicolumn{1}{|r} \rm{$e_6$} & & \multicolumn{1}{|r} 0 \\ \hline
& & & & & \multicolumn{1}{|r}  \rm{$e^T_1$} \\ \cline{6-6}
& & X & & & \multicolumn{1}{|r}  \rm{$U_3$} 
\end{array}\right].
$$
The set $\widetilde{C}_{7,7,9,7,8,9,10,8,9,10}$ of its  ULH superpatterns is composed by the matrices 
\begin{equation}\label{ulHSuperpatternGeneral}
C=\left[\begin{array}{cccccc}
& A & & \multicolumn{1}{|r} \rm{$e_6$} & & \multicolumn{1}{|r} 0 \\ \hline
& & & & & \multicolumn{1}{|r}  \rm{$e^T_1$} \\ \cline{6-6}
& & Y & & & \multicolumn{1}{|r}  \rm{$B$} 
\end{array}\right]
\end{equation}
where  $A$ is a  ULH superpattern  of $U_6$, $B$ is a ULH superpattern  of $U_3$,  and $Y$ is a superpattern of $X$. By Theorem~\ref{CompanionInC}, $C$ is  companion  if and only if the following conditions are met:
\begin{enumerate}[$(i)$]
    \item The constant part of $C$ is a  nilpotent matrix;
    \item $A$ belongs to $\mathfrak{N}^6_L(i_1-1,i_2-2,\ldots,i_{10}-10)=\mathfrak{N}^6_L(6,5,6,3,3,3,3,0,0,0)=\mathfrak{N}^6_L(3,5,6)$;
    \item $B\in\mathfrak{N}^3_T(10-i_1,10-i_2,\ldots,10-i_{10})=\mathfrak{N}^3_T(3,3,1,3,2,1,0,2,1,0)=\mathfrak{N}^3_T(1,2,3).$
\end{enumerate} 
We wish to parameterize the companions matrices of type~(\ref{ulHSuperpatternGeneral}). We  divide it in three cases:  
\begin{enumerate}[(1)]
\item $A=U_6$ and $B=U_3$. So we consider the companion matrices of type 
\begin{equation*}
C_1=\left[\begin{array}{cccccc}
& U_6 & & \multicolumn{1}{|r} \rm{$e_6$} & & \multicolumn{1}{|r} 0 \\ \hline
& & & & & \multicolumn{1}{|r}  \rm{$e^T_1$} \\ \cline{6-6}
& & Y & & & \multicolumn{1}{|r}  \rm{$U_3$} 
\end{array}\right]. 
\end{equation*}
where $Y$ is a superpattern of $X$. The solution is given in Lemma~\ref{CompanionCB}: $C_1$ is a companion matrix if and only if $s_{k-1}(C_1)=x_{k}$ for each $k=1,\ldots,n$. So 
$$Y=\begin{bmatrix} 
h      & f & d & x_4 & a & x_2 & x_1 \\
x_8      & i & g & x_5 & b & -a   & 0  \\
x_9    & j & -h-i  & x_6 & e & c   & x_3  \\
x_{10} & 0 & -j & x_7 & -f-g & -d-e   & -b-c \\
\end{bmatrix}  \ \text{ where } a,b,\ldots,j\in \mathbb{F}.
$$

\item $Y=X$. So we consider the companion matrices of  type 
$$
C_2=\left[\begin{array}{cccccc}
& A & & \multicolumn{1}{|r} \rm{$e_6$} & & \multicolumn{1}{|r} 0 \\ \hline
& & & & & \multicolumn{1}{|r}  \rm{$e^T_1$} \\ \cline{6-6}
& & X & & & \multicolumn{1}{|r}  \rm{$B$} 
\end{array}\right]. 
$$
where  $A$ is a constant ULH matrix of order 6, and $B$ is a constant ULH matrix of order 3. By   Theorem~\ref{CompanionInC}, $C_2$ is a companion matrix if and only if  $A\in\mathfrak{N}^6_L(3,5,6)$ and  $B\in\mathfrak{N}^3_T(1,2,3)$.

By  Lemma~\ref{MLHNilpotentAnidadas1a1},  $\mathfrak{N}^3_T(1,2,3)=\{U_3\}$. So $B=U_3$.

On the other hand, as any matrix of  $\mathfrak{N}^6_L(3,5,6)$ is a ULH matrix then 
\begin{equation*} 
A=\left[\begin{array}{cccccc}
a_{11} & 1 & 0 & \multicolumn{1}{|r}0 & 0 & \multicolumn{1}{|r}0  \\ 
a_{21} & a_{22} & 1 & \multicolumn{1}{|r}0 & 0 & \multicolumn{1}{|r}0  \\ 
a_{31} & a_{32} & a_{33} & \multicolumn{1}{|r}1 & 0 & \multicolumn{1}{|r}0  \\ \cline{1-3}
a_{41} & a_{42} & a_{43} & a_{44} & 1 & \multicolumn{1}{|r}0 \\
a_{51} & a_{52} & a_{53} & a_{54} & a_{55} & \multicolumn{1}{|r}1  \\ \cline{1-5}
a_{61} & a_{62} & a_{63} & a_{64} & a_{65} & a_{66}
\end{array}\right]
\end{equation*}
where $A[1,2,3]$, $A[1,2,3,4,5]$ and  $A[1,2,3,4,5,6]$ are nilpotent.  The ULH submatrices $A[1]$ and $A[1,2]$ have no restriction. So, assume that we have assigned arbitrary constant values to $a_{11},a_{21},a_{22}$.  As  $A[1,2,3]$ is a nilpotent ULH matrix, then $a_{31},a_{32},a_{33}$ should take values that make  the  characteristic polynomial of $A[1,2,3]$ to be $\lambda^3$.  Corollary~\ref{CSonGeneralizadas}, that we will state later, implies that these values are unique and can be obtained in a practical way since   $(1,a_{33},a_{32},a_{31})$ are the coordinates of $\lambda^3$ with respect to certain  basis of $\mathbb{F}_3[\lambda]$. Let us go to $A[1,2,3,4]$, for which we have no further restrictions. So, assume that we have assigned arbitrary constant values to $a_{41},a_{42},a_{43},a_{44}$. As $A[1,2,3,4,5]$ is a nilpotent ULH matrix,  then we repeat the arguments above in order to obtain the adequate values for $a_{51},a_{52},a_{53},a_{54},a_{55}$. And, finally, $A[1,2,3,4,5,6]$ is also a nilpotent ULH matrix which we get  by doing $a_{61}=\cdots=a_{66}=0$ since $A[1,2,3,4,5]$ is nilpotent and has  characteristic polynomial $\lambda^5$. Moreover, by Corollary~\ref{CSonGeneralizadas}, this solution is unique.  So, doing the corresponding calculations, we obtain
$$
A=\small\left[
\begin{array}{cccccccccc}
 a & 1 & 0 & 0 & 0 & 0  \\
 c & b & 1 & 0 & 0 & 0  \\
 -a^3-2 a c-b c & -a^2-a b-b^2-c & -a-b & 1 & 0 & 0  \\
 g & f & e & d & 1 & 0  \\
 a_{51}  & a_{52} & a e+b e-d e-f & -d^2-e & -d & 1  \\
 0 & 0 & 0 & 0 & 0 & 0  \\
\end{array}
\right] \ \text{ with } a,b,\ldots,g\in \mathbb{F}
$$
where $a_{51}=a^3 e+2 a c e-a g+b c e-c f-d g$ and $a_{52}=a^2 e+a b e+b^2 e-b f+c e-d f-g$.

\item $Y\neq X$ and   $(A,B)\neq (U_6,U_3)$.  It is possible to give a partial parametrization for this case, although we do not have the complete parametrization.  In  Example 4.5 of~\cite{Deaett-Fischer-Garnett-VanderMeulen} the authors gave, for $C_{3,4,3,4,7,6,7}$, a partial parametrization for the analogous  case.
\end{enumerate}

\section{The PB-companion ULH matrices} \label{NonStandardPolynomialBasis}

Recall that  a  matrix $A$ of order $n$ is said to be  \emph{PB-companion} or  \emph{companion  with respect to a polynomial basis}, as long as:  $(i)$ $A$ has  $n^2 - n$  entries that are constants of a field $\mathbb{F}$; $(ii)$   the $n$ remaining entries of $A$ are the variables $x_1,\ldots,x_n$; and $(iii)$ the characteristic polynomial of $A$  is 
\begin{align*} %\label{gencommatcoef}
\det(\lambda I_n-A)=p_n(\lambda) - x_1 p_{n-1}(\lambda)-\cdots- x_{n-1} p_1(\lambda) - x_n p_0(\lambda)
\end{align*}
where $\{p_n(\lambda), \ldots, p_1(\lambda), p_0(\lambda) \}$ is a polynomial basis of   $\mathbb{F}_n[\lambda]$. 

The issue we are concerned about in this section is the characterization of the PB-companion ULH matrices or, what is the same, the PB-companion matrices within the set $\widetilde{\mathcal{H}}$. We will see that the set of the PB-companion matrices of  $\widetilde{\mathcal{H}}$ is greater than $\widetilde{\mathcal{C}}$ and strictly smaller that $\widetilde{\mathcal{H}}$, and that the intermediate set $\widetilde{\mathcal{D}}$ is not useful since as  $\widetilde{\mathcal{D}}\setminus\widetilde{\mathcal{C}}$ does not contain any  PB-companion matrix. So we need a new  set between $\widetilde{\mathcal{H}}$ and $\widetilde{\mathcal{C}}$ in which we will locate the PB-companion ULH matrices.

Let $\mathcal{B}$ be the set composed by those matrices of $\mathcal{H}$ in which the variables $x_1,\ldots,x_n$ are on the $t$-block for a certain $t\in\{1,\ldots,n\}$ (there may be more than one such $t$). Note that   $\mathcal{B}\cap \mathcal{D} = \mathcal{C}$. Let us describe the matrices of $ \mathcal{B}$ more precisely: 
\begin{equation*} %\label{DefinitionB}
\text{$B_{(i_1,j_1),\ldots,(i_n,j_n)}=H_{(i_1,j_1),\ldots,(i_n,j_n)}$ whenever    $1\leq  j_1,\ldots,j_n \leq i_1,\ldots,i_n\leq n$.}
\end{equation*}
The variables   $x_1, \ldots, x_n$ are on the   $t$-block of $B_{(i_1,j_1),\ldots,(i_n,j_n)}$   if and only if $$\max\{j_1, \ldots, j_n\}\leq t \leq \min\{i_1, \ldots, i_n\}.$$
As the  following  examples shows, we have      $\mathcal{H}\supsetneq \mathcal{B} \supsetneq \mathcal{C}$.
$$ \begin{array}{|c|c|} \hline  
H_{(1,1),(3,3),(2,1),(5,4),(5,2)} \in \mathcal{H}\setminus \mathcal{B} &  B_{(3,1),(4,2),(4,1),(5,2),(5,1)} \in \mathcal{B}\setminus \mathcal{C}  \\ \hline \vspace{-3mm}
&  \\  
  \begin{bmatrix}
x_1 & 1 & 0 & 0 & 0   \\
x_3 & 0 & 1 & 0 & 0   \\
0 & 0 & x_2 & 1 & 0   \\
0 & 0 & 0 & 0 & 1   \\
0 & x_5 & 0 & x_4 & 0   \\
\end{bmatrix} &
  \left[\begin{array}{ccccc}
0 &1 & 0 & 0 & 0\\\cline{1-2}
0 & 0 & \multicolumn{1}{|r}1 & 0 & 0\\ \cline{1-3}
x_1 & 0 & \multicolumn{1}{|r}0 & \multicolumn{1}{|r}1 & 0\\
x_3 & x_2 & \multicolumn{1}{|r}0 & \multicolumn{1}{|r}0 & 1\\
x_5 & x_4 & \multicolumn{1}{|r}0 & \multicolumn{1}{|r}0 & 0 \\
\end{array}\right]   \\ \hline
\end{array}
$$
Let us see  that $\widetilde{\mathcal{B}}$ is the right place where we should look for  PB-companion ULH matrices.

\begin{theorem} \label{HMinusCNoSonGeneralizadas}
Any PB-companion ULH matrix  belongs to $\widetilde{\mathcal{B}}$.
\end{theorem}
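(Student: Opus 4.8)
The plan is to argue the contrapositive through the \emph{monomials of $P_A(\lambda)$}. If $A$ is PB-companion then $P_A(\lambda)=p_n(\lambda)-\sum_{k=1}^n x_k\,p_{n-k}(\lambda)$ is affine-linear in $x_1,\dots,x_n$; in particular no monomial of $P_A(\lambda)$ contains a product $x_rx_s$ with $r\neq s$. So it suffices to show: if the variables of a PB-companion ULH matrix do not all lie in a common $t$-block, then $P_A(\lambda)$ contains a genuine $x_rx_s$ term. First I would record that $A$ is automatically in $\widetilde{\mathcal{H}}$: being ULH, its $n$ variable entries sit weakly below the diagonal at distinct positions, and after relabelling $x_1,\dots,x_n$ according to $\prec$ (which only permutes the basis $\{p_i\}$ among the coefficient slots and thus preserves the PB-companion property) the matrix obtained by zeroing the remaining below-superdiagonal constants is exactly some $H_{(i_1,j_1),\dots,(i_n,j_n)}\in\mathcal{H}$. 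I then attach to the entry $x_k=A_{i_k,j_k}$ the integer interval $[j_k,i_k]$, and observe that $x_1,\dots,x_n$ lie in a common $t$-block precisely when $t\in\bigcap_{k}[j_k,i_k]$; hence $A\in\widetilde{\mathcal{B}}$ if and only if $\bigcap_k[j_k,i_k]\neq\emptyset$.

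The heart of the proof is the claim that, for $r\neq s$, the monomial $x_rx_s$ occurs in $P_A(\lambda)$ if and only if $[j_r,i_r]$ and $[j_s,i_s]$ are disjoint. For this I would use the classical expansion of a Hessenberg determinant: since $\lambda I_n-A$ is lower Hessenberg, the only permutations contributing to $\det(\lambda I_n-A)$ are those whose cycles are \emph{contiguous intervals}, a cycle on $[a,b]$ climbing $a\to a+1\to\cdots\to b$ along the superdiagonal and closing with the single backward entry $(\lambda I_n-A)_{b,a}$. Thus every monomial of $P_A(\lambda)$ is indexed by a partition of $\{1,\dots,n\}$ into contiguous intervals, a singleton $[a,a]$ contributing $\lambda-A_{a,a}$ and a block $[a,b]$ with $a<b$ contributing, up to sign, $A_{b,a}$. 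Because $x_k$ occurs only at the position $(i_k,j_k)$, a monomial involves $x_r$ exactly when its partition contains the block $[j_r,i_r]$, and it involves both $x_r$ and $x_s$ exactly when it contains the two blocks $[j_r,i_r]$ and $[j_s,i_s]$; as the blocks of a partition are disjoint, this is possible iff $[j_r,i_r]\cap[j_s,i_s]=\emptyset$.

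To rule out cancellation I would look at the top power of $\lambda$ in the coefficient of $x_rx_s$. When the two intervals are disjoint, the partition made of the blocks $[j_r,i_r],[j_s,i_s]$ together with singletons on every remaining index is the \emph{unique} contiguous partition that uses both blocks while leaving the maximal number of singletons; it therefore produces $\pm\,x_rx_s\,\lambda^{\,n-(i_r-j_r+1)-(i_s-j_s+1)}$ with coefficient $\pm1$, and no other partition reaches this degree in $\lambda$, so the coefficient survives regardless of the nonzero constants present in the superpattern. As a sanity check, when $x_r$ sits on the diagonal and $x_s$ on the $(s-1)$-subdiagonal this reproduces the term $x_1x_s\lambda^{\,n-1-s}$ already used in the proof of Theorem~\ref{CCompanion}.

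Finally I would close the argument combinatorially. If $A\notin\widetilde{\mathcal{B}}$ then $\bigcap_k[j_k,i_k]=\emptyset$, that is $\max_k j_k>\min_k i_k$; choosing $r,s$ realizing these two extrema gives $j_r>i_s\geq j_s$, so $[j_s,i_s]$ lies entirely to the left of $[j_r,i_r]$ and the two intervals are disjoint (note $r\neq s$, since $j_r>i_r$ is impossible). By the claim, $x_rx_s$ occurs in $P_A(\lambda)$, contradicting affine-linearity, so $A$ cannot be PB-companion. I expect the main obstacle to be the claim together with its no-cancellation part: the Hessenberg cycle structure makes the ``occurs iff disjoint'' equivalence transparent, but some care is needed to argue that the distinguished top-degree monomial is genuinely unique, hence uncancelled, once arbitrary nonzero constants are permitted below the superdiagonal.
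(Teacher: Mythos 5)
Your proof is correct and takes essentially the same route as the paper: reduce to $\widetilde{\mathcal{H}}$ (the paper asserts this in one line, you justify it via relabelling), pick $r,s$ with $j_r=\max_k j_k>\min_k i_k=i_s$, and exhibit the surviving term $x_rx_s\lambda^{\,n-(i_r-j_r+1)-(i_s-j_s+1)}$ arising from the disjoint intervals $[j_s,i_s]$ and $[j_r,i_r]$. Your only addition is that the contiguous-cycle expansion of the Hessenberg determinant makes rigorous the no-cancellation step that the paper simply asserts (``this term can not cancel out''), which is a welcome but not essentially different elaboration.
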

\begin{proof}
The problem of determining the PB-companion ULH matrices  is the same that the problem of determining the PB-companion matrices within  $\widetilde{\mathcal{H}}$.

Let   $A\in\widetilde{H}_{(i_1,j_1),\ldots,(i_n,j_n)}$ be a matrix of $\widetilde{\mathcal{H}} \setminus \widetilde{\mathcal{B}}$. As $A$ is not in  $\widetilde{\mathcal{B}}$  then 
$$\max\{j_1, \ldots, j_n\}=j_r>i_s=\min\{i_1, \ldots, i_n\}.$$
for some $r,s\in\{1,\ldots,n\}$. Therefore  $A[j_s,\ldots,i_s]$ contains $x_{s}$, $A[j_r,\ldots,i_r]$ contains $x_{r}$, and both matrices are disjoint. So in $P_A(\lambda)$ the term $x_{s}x_{r}\lambda^{n-(i_s-j_s+1)-(i_r-j_r+1)}$ appears, and this term can not cancel out. We conclude that  $A$ is not a companion matrix. 
\end{proof}

In the proof of the previous result we have seen that in the characteristic polynomial of a matrix of $\widetilde{\mathcal{H}} \setminus \widetilde{\mathcal{B}}$ inevitably the product of two variables $x_r x_s$ appear. For matrices of $\widetilde{\mathcal{B}}$ no product of two variables appear, as we will show in the next Lemma. Moreover, the polynomials $p_{n-k}(\lambda)$ that accompany each variable $x_k$ are crucial for knowing when a matrix of $\widetilde{\mathcal{B}}$ is PB-companion. We will show some useful properties of those polynomials.

\begin{lemma} \label{GCMinBP(i-iii)}
Let   $A$ be a matrix of $\widetilde{B}_{(i_1,j_1),\ldots,(i_n,j_n)}$. Then the following  is true: 
\begin{enumerate}[(i)]
\item  \label{MonicPolynomialsForB}
The characteristic polynomial of $A$ can be written as
\begin{align}\label{CharPolGenCompMat(i-iii)} 
P_A(\lambda)=p_n(\lambda)-x_1 p_{n-1}(\lambda)-\cdots-x_n p_0(\lambda)
\end{align}
where $p_n(\lambda),  p_{n-1}(\lambda),\ldots ,  p_0(\lambda)$ are monic polynomials of $\mathbb{F}_n[\lambda]$. 
    \item \label{GradosPolinomios}  
    $\deg(p_n(\lambda))=n$  and   $\deg(p_{n-k}(\lambda))=n-i_k+j_k-1$ for  $k=1,\ldots,n.$
    \item \label{OrderDegrees}  $n=\deg(p_n(\lambda)) \geq \deg(p_{n-1}(\lambda))\geq \ldots \geq \deg(p_0(\lambda)) $. 
    \item \label{DegreeOfThePs} If $A$ is   PB-companion then for   $k=0,1,\ldots,n$ the degree of  $p_{n-k}(\lambda)$ is at least $n-k$.
    \item \label{VariableKOverDiagonalK}  If $A$ is   PB-companion then for   $k=1,\ldots,n$ the variable $x_{k}$ is above the $k$-subdiagonal.
    
\end{enumerate}
\end{lemma}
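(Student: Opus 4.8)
The plan is to first establish a single combinatorial formula for the characteristic polynomial of an arbitrary ULH matrix, and then read off all five assertions from it. Let $M=\lambda I_n - A$. Since $A$ is lower Hessenberg with $1$'s on the superdiagonal, $M_{ij}=0$ for $j>i+1$ and $M_{i,i+1}=-1$, so a permutation $\sigma$ contributes to $\det M=\sum_\sigma \operatorname{sgn}(\sigma)\prod_i M_{i,\sigma(i)}$ only when $\sigma(i)\le i+1$ for every $i$. I would prove that these permutations are exactly the products of cyclic permutations on consecutive intervals: $\{1,\dots,n\}$ splits into blocks $\{a,\dots,b\}$ on each of which $\sigma$ acts as $a\mapsto a+1\mapsto\cdots\mapsto b\mapsto a$. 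A direct sign computation then shows that each interval $\{a,\dots,b\}$ contributes exactly the single entry $M_{b,a}$ (its bottom-left corner), since the cycle sign $(-1)^{b-a}$ multiplies the product $(-1)^{b-a}M_{b,a}$ of the entries along the cycle. Hence
\begin{equation*}
P_A(\lambda)=\sum_{\text{partitions of }\{1,\dots,n\}\text{ into intervals}}\ \prod_{\{a,\dots,b\}} M_{b,a},\qquad M_{b,a}=\begin{cases}\lambda-a_{aa}&b=a,\\ -a_{ba}&b>a.\end{cases}
\end{equation*}

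With this formula the structure of the variables becomes transparent. Each factor $M_{b,a}$ is a single entry, so it carries at most one variable, and the factor carrying $x_k=a_{i_k,j_k}$ is present precisely when $\{j_k,\dots,i_k\}$ is one of the intervals of the partition. Because $A\in\widetilde{B}_{(i_1,j_1),\dots,(i_n,j_n)}$, all variables lie in a common $t$-block, i.e. $j_\ell\le t\le i_\ell$ for all $\ell$; thus every interval $\{j_\ell,\dots,i_\ell\}$ contains $t$, so two distinct such intervals can never be disjoint and hence cannot occur together in one partition. This rules out every product $x_rx_s$ and yields the affine form~\eqref{CharPolGenCompMat(i-iii)}, whose variable-free part is $P_{A_0}(\lambda)$. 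Collecting the partitions that use the interval $\{j_k,\dots,i_k\}$, the remaining indices $\{1,\dots,j_k-1\}$ and $\{i_k+1,\dots,n\}$ are partitioned independently, and the same formula identifies the coefficient of $x_k$ as $-\,P_{A[1,\dots,j_k-1]}(\lambda)\,P_{A[i_k+1,\dots,n]}(\lambda)$; the $t$-block hypothesis guarantees these two submatrices contain no variable, so this is a genuine polynomial in $\lambda$. Setting $p_{n-k}(\lambda):=P_{A[1,\dots,j_k-1]}(\lambda)P_{A[i_k+1,\dots,n]}(\lambda)$ and $p_n(\lambda):=P_{A_0}(\lambda)$ gives~\eqref{CharPolGenCompMat(i-iii)} with monic $p$'s (products of characteristic polynomials), proving~(\ref{MonicPolynomialsForB}); reading degrees as (order of the leading block) $+$ (order of the trailing block) gives $\deg p_{n-k}=(j_k-1)+(n-i_k)=n-i_k+j_k-1$, which is~(\ref{GradosPolinomios}). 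For~(\ref{OrderDegrees}), since $\deg p_{n-k}=n-(i_k-j_k+1)$ and the positions are listed in $\prec$-order, the differences $i_k-j_k$ are non-decreasing in $k$, so the degrees $\deg p_{n-k}$ are non-increasing; and $\deg p_{n-1}\le n-1<n=\deg p_n$ follows from $i_1\ge j_1$.

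For~(\ref{DegreeOfThePs}) I would argue purely from linear algebra. If $A$ is PB-companion, then $\{p_n,\dots,p_0\}$ is a basis of the $(n+1)$-dimensional space $\mathbb{F}_n[\lambda]$, hence linearly independent, and by~(\ref{OrderDegrees}) their degrees are non-increasing. Fix $m=n-k$ and look at the $m+1$ polynomials $p_0,\dots,p_m$ of smallest degree: they are linearly independent. If $\deg p_m\le m-1$, then all of them would lie in $\mathbb{F}_{m-1}[\lambda]$, a space of dimension $m$, which is impossible for $m+1$ independent vectors; therefore $\deg p_{n-k}=\deg p_m\ge m=n-k$. Finally~(\ref{VariableKOverDiagonalK}) is immediate by combining~(\ref{GradosPolinomios}) and~(\ref{DegreeOfThePs}): the inequality $n-i_k+j_k-1=\deg p_{n-k}\ge n-k$ forces $i_k-j_k\le k-1$, i.e. $x_k$ sits strictly above the $k$-subdiagonal.

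The part that needs the most care is the combinatorial formula at the start: proving that the only contributing permutations are the consecutive-interval cycles, and verifying that the signs collapse so that each interval contributes exactly $M_{b,a}$. Once that formula is in hand, the appearance of $t$ in every variable interval (the defining property of $\widetilde{\mathcal{B}}$) does all the work for~(\ref{MonicPolynomialsForB})--(\ref{OrderDegrees}), while~(\ref{DegreeOfThePs}) and~(\ref{VariableKOverDiagonalK}) are short dimension-counting consequences.
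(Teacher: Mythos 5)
Your proposal is correct and follows essentially the same route as the paper: you arrive at the identical decomposition $P_A(\lambda)=P_{A_0}(\lambda)-\sum_k x_k\,P_{A[1,\ldots,j_k-1]}(\lambda)\,P_{A[i_k+1,\ldots,n]}(\lambda)$, read off the degrees $n-i_k+j_k-1$, use the $\prec$-ordering for monotonicity, a dimension count in $\mathbb{F}_{m-1}[\lambda]$ for part (iv), and combine (ii) with (iv) for part (v), exactly as the paper does. The only difference is that you prove the underlying Hessenberg expansion (interval-partition cycles and the sign collapse to $M_{b,a}$) from first principles, whereas the paper simply asserts this property of ULH matrices in the proof of Theorem~\ref{CCompanion} and reuses it.
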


\begin{proof} \begin{enumerate}[(i)]
\item 
To show that the characteristic polynomial of $A$ can be written as in~(\ref{CharPolGenCompMat(i-iii)}) we use the same arguments that in the proof of Theorem~\ref{CCompanion}. This leads us to conclude that
    \begin{eqnarray} \label{PolCharInPolBasis2}
    P_{A}(\lambda) &=& P_{A_0}(\lambda)- \sum_{k=1}^n x_k \,P_{A[1,\ldots,j_k-1]}(\lambda) \, P_{A[i_k+1,\ldots,n]}(\lambda),  
    \end{eqnarray}
    where $A_0$ is the constant part of $A$. Since $x_1,\ldots,x_n$ are in some $t$-block of $A$ then for $k=1,\ldots,n$ the matrix $A[1,\ldots,j_k-1]$ is  empty  or a submatrix of the constant matrix $A[1,\ldots,t-1]$, and the matrix $A[i_k+1,\ldots,n]$ is empty or a submatrix of the  constant matrix $A[t+1,\ldots,n]$. Since the variables $x_1,\ldots, x_n$ do not appear in either $P_{A[1,\ldots,j_k-1]}(\lambda)$ or   $P_{A[i_k+1,\ldots,n]}(\lambda)$, then we can    
    match~(\ref{PolCharInPolBasis2}) and~(\ref{CharPolGenCompMat(i-iii)}) to conclude that  $p_n(\lambda)=P_{A_0}(\lambda)$ and $p_{n-k}(\lambda)=P_{A[1,\ldots,j_k-1]}(\lambda) \, P_{A[i_k+1,\ldots,n]}(\lambda)$ for $k=1,\ldots,n$ are monic polynomials of $\mathbb{F}_n[\lambda]$.

    \item  $\deg(p_n(\lambda))=\deg(P_{A_0}(\lambda))=n$ and  for $k=1,\ldots,n$ $$\deg(p_{n-k}(\lambda))=\deg(P_{A[1,\ldots,j_k-1]}(\lambda))  \deg(P_{A[i_k+1,\ldots,n]}(\lambda))=j_k-1+n-i_k.$$

\item It follows from~(\ref{GradosPolinomios}) by taking into account  that  the variables $x_1,\ldots,x_n$ are placed in $A$ according to the order $\prec$, which implies that $i_1-j_1\leq i_2-j_2\leq \cdots \leq i_n-j_n$. 

\item If  $A$ is PB-companion then $\{p_0(\lambda),p_1(\lambda),\ldots,p_n(\lambda)\}$ is a basis of  $\mathbb{F}_n[\lambda]$. Therefore the inequality $\deg(p_{n-k}(\lambda))\geq n-k$ for each $k=1,\ldots,n$ follows from~(\ref{OrderDegrees}).

\item   If $x_k$ is on the $h$-subdiagonal then, by~(\ref{GradosPolinomios}) and~(\ref{DegreeOfThePs}),  $\deg(p_{n-k}(\lambda))=n-h-1\geq n-k$.  So  $h< k$.
\end{enumerate}
\end{proof}

Lemma~\ref{GCMinBP(i-iii)} help us to find a first great set of PB-companion ULH matrices.   

\begin{theorem} \label{CSonGeneralizadas}
Each  matrix of $\widetilde{\mathcal{C}}$ is  PB-companion. 
\end{theorem}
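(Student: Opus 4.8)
The plan is to show that the only condition for $A$ to be PB-companion which is not yet in hand — namely that the polynomials accompanying the variables form a \emph{basis} of $\mathbb{F}_n[\lambda]$ — is automatic for matrices of $\widetilde{\mathcal{C}}$, and that this is forced by the fact that in a $\mathcal{C}$-matrix the variables sit on pairwise distinct subdiagonals.

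First I would record the inclusion $\widetilde{\mathcal{C}}\subseteq\widetilde{\mathcal{B}}$: since $\mathcal{C}=\mathcal{B}\cap\mathcal{D}\subseteq\mathcal{B}$, every ULH superpattern of a matrix of $\mathcal{C}$ is also a ULH superpattern of a matrix of $\mathcal{B}$. Consequently Lemma~\ref{GCMinBP(i-iii)} applies to any $A\in\widetilde{C}_{i_1,\ldots,i_n}$. By part (i) of that lemma the characteristic polynomial already has the required shape $P_A(\lambda)=p_n(\lambda)-x_1 p_{n-1}(\lambda)-\cdots-x_n p_0(\lambda)$ with each $p_{n-k}$ a monic element of $\mathbb{F}_n[\lambda]$. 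Thus the whole statement reduces to verifying that $\{p_n,p_{n-1},\ldots,p_0\}$ is a basis.

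The decisive step is the degree computation. Because $A\in\widetilde{\mathcal{C}}$, the variable $x_k$ occupies the position $(i_k,i_k-k+1)$, so in the $\widetilde{\mathcal{B}}$-notation of Lemma~\ref{GCMinBP(i-iii)} one has $j_k=i_k-k+1$, whence $i_k-j_k=k-1$. Substituting this into the degree formula in part (ii) of the lemma gives $\deg(p_{n-k})=n-i_k+j_k-1=n-k$ for $k=1,\ldots,n$, together with $\deg(p_n)=n$. Hence the $n+1$ monic polynomials $p_n,p_{n-1},\ldots,p_0$ have the pairwise distinct degrees $n,n-1,\ldots,0$.

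Finally I would invoke the elementary fact that $n+1$ polynomials whose degrees are exactly $0,1,\ldots,n$ are linearly independent, and therefore form a basis of the $(n+1)$-dimensional space $\mathbb{F}_n[\lambda]$; this makes $A$ PB-companion. I do not expect any real obstacle: essentially all the work is carried by Lemma~\ref{GCMinBP(i-iii)}, and the only feature distinguishing $\widetilde{\mathcal{C}}$ from the larger $\widetilde{\mathcal{B}}$ is that placing the variables on distinct subdiagonals ($i_k-j_k=k-1$) forces the degrees of the $p_{n-k}$ to hit each of the values $0,1,\ldots,n$ exactly once, which is precisely the degree condition guaranteeing that a monic family is a basis.
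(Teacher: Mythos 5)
Your proof is correct and takes essentially the same route as the paper: both apply Lemma~\ref{GCMinBP(i-iii)} parts (i)--(ii) to $A\in\widetilde{\mathcal{C}}\subseteq\widetilde{\mathcal{B}}$, use $j_k=i_k-k+1$ to compute $\deg(p_{n-k}(\lambda))=n-i_k+j_k-1=n-k$, and conclude that the monic polynomials of the pairwise distinct degrees $0,1,\ldots,n$ form a basis of $\mathbb{F}_n[\lambda]$. The only difference is expository: you make explicit the inclusion $\widetilde{\mathcal{C}}\subseteq\widetilde{\mathcal{B}}$ and the distinct-degrees basis fact, both of which the paper leaves implicit.
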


\begin{proof}
If $A\in \widetilde{\mathcal{C}}_{(i_1,j_1),\ldots,(i_n,j_n)}$ then $j_k=i_k-k+1$ for  $k=1,\ldots,n$. From  Lemma~\ref{GCMinBP(i-iii)}~(\ref{MonicPolynomialsForB})-(\ref{GradosPolinomios}) it follows that $P_A(\lambda)=p_n(\lambda) - x_1 p_{n-1}(\lambda)-\cdots - x_n p_0(\lambda)$ with $\deg(p_n(\lambda))=n$  and   
$$\deg(p_{n-k}(\lambda))=n-i_k+j_k-1=n-i_k+i_k-k+1-1=n-k \ \text{ for } k=1,\ldots,n.$$
So $\{p_n(\lambda), \ldots, p_1(\lambda), p_0(\lambda) \}$ is a  basis of   $\mathbb{F}_n[\lambda]$ and $A$ is PB-companion. 
\end{proof}

\subsection{A criterion for PB-companion matrices}

Lemma~\ref{GCMinBP(i-iii)}~(\ref{MonicPolynomialsForB}) permits us to introduce a constant matrix associated to each matrix of $\widetilde{\mathcal{B}}$.

\begin{definition} \label{MA}
Let   $A$ be a matrix of $\widetilde{\mathcal{B}}$   whose characteristic polynomial is
\begin{align*}%\label{CharPolGenCompMat} 
P_A(\lambda)=p_n(\lambda)-x_1 p_{n-1}(\lambda)-\cdots-x_{n-1}p_1(\lambda)-x_n p_0(\lambda).
\end{align*}
We will denote by    $\mathfrak{M}_A$ the constant matrix $[p_{ij}]_{i,j=0}^n$ of order $n+1$ such that the entries on its rows are taken from   $p_i(\lambda)=p_{i0}+p_{i1}\lambda+\cdots+p_{in}\lambda^n$ for $i=0,1,\ldots,n$. 
\end{definition}

The main objective of Section~\ref{NonStandardPolynomialBasis} is the characterization of the PB-companion ULH matrices. Relying on Theorem~\ref{HMinusCNoSonGeneralizadas}  we will achieve this objective  in the next result    with  the characterization of the PB-companion matrices of $\widetilde{\mathcal{B}}$. 

\begin{theorem} \label{pbCompanionMA}
Any PB-companion ULH matrix  belongs to $\widetilde{\mathcal{B}}$. Moreover, if $A\in \widetilde{\mathcal{B}}$ then $A$ is PB-companion if and only if $\mathfrak{M}_{A}$ is nonsingular.
\end{theorem}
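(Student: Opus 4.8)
The plan is to note that the first sentence of the statement is literally Theorem~\ref{HMinusCNoSonGeneralizadas}, already proved, so all that remains is to establish the stated equivalence for a fixed matrix $A\in\widetilde{\mathcal{B}}$. First I would invoke Lemma~\ref{GCMinBP(i-iii)}~(\ref{MonicPolynomialsForB}), which tells us that for every such $A$ the characteristic polynomial decomposes as
$$P_A(\lambda)=p_n(\lambda)-x_1 p_{n-1}(\lambda)-\cdots-x_n p_0(\lambda),$$
with each $p_{n-k}(\lambda)$ a monic polynomial of $\mathbb{F}_n[\lambda]$.

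The one point that deserves care is the uniqueness of this decomposition. Regarding $P_A(\lambda)$ as a polynomial in $x_1,\ldots,x_n$ with coefficients in $\mathbb{F}[\lambda]$, its variable-free part is forced to equal $p_n(\lambda)$ and the coefficient of each $x_k$ is forced to equal $-p_{n-k}(\lambda)$; hence the polynomials $p_0(\lambda),\ldots,p_n(\lambda)$ are uniquely determined by $A$. This is exactly what licenses identifying the abstract polynomials appearing in the PB-companion definition with the concrete ones supplied by the lemma: since conditions $(i)$ and $(ii)$ of the definition hold automatically for any member of $\widetilde{\mathcal{B}}$, and condition $(iii)$ can only be met through the unique decomposition above, $A$ is PB-companion if and only if $\{p_n(\lambda),\ldots,p_1(\lambda),p_0(\lambda)\}$ is a basis of the $(n+1)$-dimensional space $\mathbb{F}_n[\lambda]$.

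The last step converts ``is a basis'' into ``$\mathfrak{M}_A$ is nonsingular''. By Definition~\ref{MA} the $i$-th row of $\mathfrak{M}_A$ is the coordinate vector $(p_{i0},p_{i1},\ldots,p_{in})$ of $p_i(\lambda)$ relative to the monomial basis $\{1,\lambda,\ldots,\lambda^n\}$, so $\mathfrak{M}_A$ is precisely the coordinate matrix of the family $p_0,\ldots,p_n$ in that basis. The standard fact that $n+1$ vectors of an $(n+1)$-dimensional space form a basis exactly when their coordinate matrix is invertible then yields the claimed equivalence and closes the argument.

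I do not foresee a genuine obstacle, since the substantive work has been front-loaded into Lemma~\ref{GCMinBP(i-iii)} and Theorem~\ref{HMinusCNoSonGeneralizadas}. The only place requiring a moment's attention is the uniqueness of the decomposition, without which one could not be certain that failure of the specific monic polynomials $p_i$ to form a basis also rules out some alternative basis representation of $P_A(\lambda)$; once uniqueness is recorded, the remainder is the routine translation between linear independence of vectors and invertibility of their coordinate matrix.
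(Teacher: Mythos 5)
Your proof is correct and takes essentially the same route as the paper's: reduce the first sentence to Theorem~\ref{HMinusCNoSonGeneralizadas}, then identify PB-companionship of $A\in\widetilde{\mathcal{B}}$ with $\{p_0(\lambda),\ldots,p_n(\lambda)\}$ being a basis of $\mathbb{F}_n[\lambda]$, which holds exactly when the coordinate matrix $\mathfrak{M}_A$ is nonsingular. The only difference is that you spell out the uniqueness of the decomposition of $P_A(\lambda)$ by comparing coefficients of $x_1,\ldots,x_n$ over $\mathbb{F}[\lambda]$, a point the paper's two-line proof leaves implicit (it is guaranteed there by the explicit expansion in the proof of Lemma~\ref{GCMinBP(i-iii)}).
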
 

\begin{proof}
The first sentence is Theorem~\ref{HMinusCNoSonGeneralizadas}.  Now, let $A$ be a matrix of $\widetilde{\mathcal{B}}$. Then   $A$ is PB-companion if and only if $\{p_0(\lambda),p_1(\lambda),\ldots,p_n(\lambda)\}$ is a basis of $\mathbb{F}_n[\lambda]$ if and only if $\mathfrak{M}_{A}$ is nonsingular. 
\end{proof}

\begin{example} We  wish to determine if it is PB-companion the matrix 
$$
A  =\left[\begin{array}{ccccccc}
2 & 1 & 0 & 0 & 0 & 0 & 0 \\
0 & 3 & 1 & 0 & 0 & 0 & 0 \\
1 & 3 & -2 & 1 & 0 & 0 & 0 \\
4 & 0 & 2 & -1 & 1 & 0 & 0 \\ \cline{1-5}
x_5 & 1 & -3 & x_2 & x_1 & \multicolumn{1}{|r}1 & 0 \\
x_7 & x_6 & 2 & 5 & x_3 & \multicolumn{1}{|r}3 & 1 \\
1 & 0 & 4 & x_4 & 1 & \multicolumn{1}{|r}4 & 2
\end{array}\right] \in \widetilde{\mathcal{B}}_{(5,5), (5,4), (6,5), (7,4), (5,1), (6,2), (6,1)}.
$$
The characteristic polynomial of $A$ is  
\begin{align*} 
P_A(\lambda)=p_7(\lambda)-x_1 p_{6}(\lambda)-\cdots-x_{6}p_1(\lambda)-x_7 p_0(\lambda)
\end{align*}
where  
$$\left.\begin{array}{rllll}
x_7 \rightarrow &p_0(\lambda)=P_{A[7]}(\lambda)=-2+\lambda; \\
x_6 \rightarrow&p_1(\lambda)=P_{A[1]}(\lambda)\, P_{A[7]}(\lambda)=4-4\lambda+\lambda^2; \\
x_5 \rightarrow&p_2(\lambda)=P_{A[6,7]}(\lambda)=2-5\lambda+\lambda^2; \\
x_4 \rightarrow&p_3(\lambda)=P_{A[1,2,3]}(\lambda)=17-7\lambda-3\lambda^2+\lambda^3; \\
x_3 \rightarrow&p_4(\lambda)=P_{A[1,2,3,4]}(\lambda)\, P_{A[7]}(\lambda)=-2-39\lambda+44\lambda^2-8\lambda^3-4\lambda^4+\lambda^5;  \\
x_2 \rightarrow&p_5(\lambda)=P_{A[1,2,3]}(\lambda)\, P_{A[6,7]}(\lambda)=34-99\lambda+46\lambda^2+10\lambda^3-8\lambda^4+\lambda^5; \\
x_1 \rightarrow&p_6(\lambda)=P_{A[1,2,3,4]}(\lambda)\, P_{A[6,7]}(\lambda)=2+35\lambda-123\lambda^2+76\lambda^3-7\lambda^5+\lambda^6; \\
1 \, \rightarrow&p_7(\lambda)=P_{A_0}(\lambda)=208-317\lambda+168\lambda^2-129\lambda^3+73\lambda^4-7\lambda^6+\lambda^7.
\end{array}\right.$$
The   matrix $\mathfrak{M}_{A}=[p_{ij}]_{i,j=0}^7$ is
\begin{align*}
\begin{array}{c|cccccccc|} 
 & \lambda^0 & \lambda^1 & \lambda^2 & \lambda^3 & \lambda^4 & \lambda^5 & \lambda^6 & \lambda^7  \\ \cline{1-9}
p_{0}(\lambda)   & \y -2 & \y 1 & \y 0 &   0 &   0 &   0  &   0 &   0  \\  %\cline{3-3}
p_{1}(\lambda)   & \y 4 & \y -4 & \y 1 &   0 &   0 &   0 &   0  &   0 \\  
p_{2}(\lambda)   & \y 2 & \y -5 & \y 1 &   0 &   0 &   0 &   0 &   0  \\  %\cline{4-6}
p_{3}(\lambda)   & 17 & -7 & -3 & \y 1 &   0  &   0  &   0 &   0 \\  %\cline{6-9}
p_{4}(\lambda)   & -2 & -39 & 44 & -8 & \y -4 & \y 1  &   0 &   0 \\  
p_{5}(\lambda)   & 34 & -99 & 46 & 10 & \y -8 & \y 1  &   0 &   0 \\    
p_{6}(\lambda)   & 2 & 35 & -123 & 76 &  0 &  -7  & \y 1 &   0  \\   
p_{7}(\lambda)   & 208 & -317 & 168 &  -129 &  73 & 0 &  -7  & \y 1\\    \cline{1-9}
\end{array}
\end{align*}
By Theorem~\ref{pbCompanionMA},  $A$ is PB-companion  since  $\mathfrak{M}_{A}[0,1,2]$, $\mathfrak{M}_{A}[3]$, $\mathfrak{M}_{A}[4,5]$,  $\mathfrak{M}_{A}[6]$, and $\mathfrak{M}_{A}[7]$ (the grey blocks in the diagonal) are nonsingular. 
\end{example}

In general, to determine if a given  matrix $A$ of $\widetilde{\mathcal{B}}$ is  PB-companion needs some cumbersome calculations to obtain the polynomials that go into the characteristic polynomial of $A$. Nevertheless, in some cases  the calculations are going to simplify so much that we will conclude easily if $A$ is or it is not  PB-companion. This is because we do not need to known all the entries on the matrix $\mathfrak{M}_{A}$ to conclude that it is nonsingular. We only need to known the submatrices that were marked in gray in the previous example.

\begin{theorem} \label{pbCompanionConcatenation}
Let   $A$ be a matrix of $\widetilde{\mathcal{B}}$  with 
$P_A(\lambda)=p_n(\lambda)-x_1 p_{n-1}(\lambda)-\cdots-x_n p_0(\lambda)$ and let the  concatenation associated to $A$ be 
    \begin{equation*} 
            (0,1,\ldots,n)=(b_1,\ldots,e_1)^\frown\ldots^\frown(b_s,\ldots,e_s),
    \end{equation*}
    where  $e_1, \ldots, e_{s}$ are  the  $t\in\{0,1,\ldots, n\}$ such  that the degree of $p_{t}(\lambda)$ is $t$.  
    
    Then $A$  is PB-companion if and only if $\mathfrak{M}_{A}[b_h,\ldots,e_h]$ is nonsingular for    $h=1,\ldots,s$.
\end{theorem}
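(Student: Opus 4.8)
The plan is to reduce the statement to a determinantal factorization of $\mathfrak{M}_A$ and then quote Theorem~\ref{pbCompanionMA}. That theorem already says $A$ is PB-companion if and only if $\mathfrak{M}_A$ is nonsingular, so the entire task is to prove that $\mathfrak{M}_A$ is nonsingular precisely when each of the diagonal blocks $\mathfrak{M}_A[b_h,\ldots,e_h]$, $h=1,\ldots,s$, is nonsingular. I would obtain this from one structural fact: with respect to the partition of the index set $\{0,1,\ldots,n\}$ into the consecutive segments $(b_1,\ldots,e_1),\ldots,(b_s,\ldots,e_s)$, the matrix $\mathfrak{M}_A$ is block lower triangular, with the segments $[b_h,e_h]$ as its diagonal blocks.

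To prove the block triangularity I would argue directly from the degrees of the $p_i(\lambda)$. Row $i$ of $\mathfrak{M}_A$ holds the coefficients of $p_i(\lambda)$, which is monic by Lemma~\ref{GCMinBP(i-iii)}(\ref{MonicPolynomialsForB}); writing $d_i=\deg(p_i(\lambda))$, this gives $p_{ij}=0$ whenever $j>d_i$. Two properties of the sequence $(d_i)$ are decisive. First, by Lemma~\ref{GCMinBP(i-iii)}(\ref{OrderDegrees}) the degrees increase weakly with the index, $d_0\leq d_1\leq\cdots\leq d_n=n$. Second, each right endpoint of the concatenation satisfies $d_{e_h}=e_h$ by definition. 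Hence, for any row $i\leq e_h$ and any column $j>e_h$ one has $d_i\leq d_{e_h}=e_h<j$, so $p_{ij}=0$. Since any pair $(i,j)$ whose indices lie in distinct segments, with $i$'s segment preceding $j$'s, satisfies $i\leq e_h<j$ for the relevant cut point $e_h$, every entry lying strictly above the diagonal blocks vanishes, which is exactly block lower triangularity.

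With the block triangular form established, $\det\mathfrak{M}_A=\prod_{h=1}^{s}\det\big(\mathfrak{M}_A[b_h,\ldots,e_h]\big)$, so $\mathfrak{M}_A$ is nonsingular if and only if every diagonal block is nonsingular. Combining this with Theorem~\ref{pbCompanionMA} gives the claimed equivalence. That the concatenation is a genuine partition causes no trouble, since $d_n=n$ forces $n$ to be a cut point and $b_1=0$ starts the first segment.

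I do not expect a real obstacle: the proof is short once the block triangularity is seen, and the single inequality $d_i\leq d_{e_h}=e_h<j$ carries the whole argument. The only point deserving care is the bookkeeping that turns the entrywise vanishing pattern into a clean block statement, namely checking that the indices $e_h$ with $d_{e_h}=e_h$ are precisely the places where the degree monotonicity seals off one segment from the next, so that no nonzero entry can straddle two segments.
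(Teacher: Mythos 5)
Your proposal is correct and takes essentially the same route as the paper: show that $\mathfrak{M}_A$ is block lower triangular with diagonal blocks $\mathfrak{M}_{A}[b_1,\ldots,e_1],\ldots,\mathfrak{M}_{A}[b_s,\ldots,e_s]$, factor the determinant, and invoke Theorem~\ref{pbCompanionMA}. If anything, your derivation of the triangularity from the monotonicity of degrees in Lemma~\ref{GCMinBP(i-iii)}~(\ref{OrderDegrees}) together with $\deg(p_{e_h}(\lambda))=e_h$ is \emph{unconditional} on $A$ being PB-companion, which is slightly tidier than the paper's wording (which states the triangularity under the PB-companion hypothesis, citing also Lemma~\ref{GCMinBP(i-iii)}~(\ref{DegreeOfThePs})) and makes both directions of the equivalence immediate.
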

\begin{proof}
 From Lemma~\ref{GCMinBP(i-iii)}~(\ref{OrderDegrees})  and~(\ref{DegreeOfThePs}), if $A$ is PB-companion then  $\mathfrak{M}_{A}$ is a  lower triangular block matrix with $\mathfrak{M}_{A}[b_1,\ldots,e_1],\ldots,\mathfrak{M}_{A}[b_s,\ldots,e_s]$ as the blocks on its  diagonal. From Theorem~\ref{pbCompanionMA},   $A$  is  PB-companion if and only if $\mathfrak{M}_{A}$ is nonsingular if and only if $\mathfrak{M}_{A}[b_h,\ldots,e_h]$ is nonsingular for  each  $h=1,\ldots,s$.
\end{proof}

\subsection{Concatenations with components of length at most two} \label{SomeSpecialCases}

If  $A\in \widetilde{\mathcal{B}}$ then $(0,1,\ldots,n)=(0)^\frown(1)^\frown\ldots^\frown(n)$ is the concatenation associated to $A$ if and only if $\deg(p_t(\lambda))=t$   for  $t=0,1,\ldots,n$ if and only if  $A$ belongs to $\widetilde{\mathcal{C}}$. So, the following is a restatement of Theorem~\ref{CSonGeneralizadas} in terms of the length of the components of the concatenation: \emph{Each matrix of $\widetilde{\mathcal{B}}$ whose associated concatenation  has only components of length one is PB-companion}. 

In the next result we  characterize the PB-companion matrices of $\widetilde{\mathcal{B}}$ whose associated concatenation has all its components of length at most two.  

\begin{theorem}\label{CharacterizationLength1or2}
Let   $A$ be a matrix of $\widetilde{\mathcal{B}}$  with 
$$P_A(\lambda)=p_n(\lambda)-x_1 p_{n-1}(\lambda)-\cdots-x_n p_0(\lambda).$$ 
We define the  \emph{concatenation associated to $A$} as 
\begin{equation} \label{Concatenation}
            (0,1,\ldots,n)=(b_1,\ldots,e_1)^\frown\ldots^\frown(b_s,\ldots,e_s)
\end{equation}
where  $e_1, \ldots, e_{s}$ are  the  $t\in\{0,1,\ldots, n\}$ such  that the degree of $p_{t}(\lambda)$ is $t$.  

Then $A$ is a PB-companion matrix such that the length of the components of the concatenation associated to $A$ is either one or two if and only if on each  $k$-subdiagonal of $A$ 
            \begin{enumerate}[I.]
               \item \label{1var} the only variable is $x_{k+1}$; or
               \item \label{2var} the only two variables are $x_{k+1}$ and $x_{k+2}$, and $\displaystyle\sum_{r=i-k}^{i} a_{rr} \neq \sum_{r= i'-k}^{i'} a_{rr}$ if  $x_{k+1}$ is the $(i,i-k)$ entry of $A$ and $x_{k+2}$ is the $(i',i'-k)$ entry of $A$; or
               \item \label{0var} no variable appears.
           \end{enumerate}
\end{theorem}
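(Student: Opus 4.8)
The plan is to reduce everything to Theorem~\ref{pbCompanionConcatenation}, which says that a matrix $A\in\widetilde{\mathcal{B}}$ is PB-companion if and only if each diagonal block $\mathfrak{M}_A[b_h,\ldots,e_h]$ is nonsingular. Since we are assuming the associated concatenation has components only of length one or two, the blocks come in two sizes: singleton blocks $\mathfrak{M}_A[e_h]$ and $2\times 2$ blocks $\mathfrak{M}_A[b_h,e_h]$ with $e_h=b_h+1$. The singleton blocks are always nonsingular: the diagonal entry is the leading coefficient of the monic polynomial $p_{e_h}(\lambda)$ of degree $e_h$, which is $1$. So the entire problem collapses to determining when each $2\times 2$ diagonal block is nonsingular, and then translating that algebraic condition into the subdiagonal/trace condition stated in the theorem.

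First I would analyze a single length-two component, say $(b,e)=(t,t+1)$. Here $\deg(p_t(\lambda))<t$ (otherwise $t$ would itself be an endpoint $e_h$) while $\deg(p_{t+1}(\lambda))=t+1$. By Lemma~\ref{GCMinBP(i-iii)}~(\ref{OrderDegrees}) the degrees are nonincreasing as the index decreases, and $\deg(p_{t+1})=t+1$ forces $\deg(p_t)\geq t$; combined with $\deg(p_t)<t$ being impossible here, the precise situation is $\deg(p_t)=t-1$ and $\deg(p_{t+1})=t+1$. Writing the relevant $2\times 2$ block of $\mathfrak{M}_A$ in the coordinates $\lambda^t,\lambda^{t+1}$, the block has the shape $\left[\begin{smallmatrix} c & 0\\ * & 1\end{smallmatrix}\right]$ where $c$ is the coefficient of $\lambda^{t}$ in $p_{t+1}(\lambda)$ (the top-right entry vanishes because $\deg p_t=t-1<t$). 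Hence the block is nonsingular if and only if $c\neq 0$, i.e. if and only if the coefficient of $\lambda^{t}$ in the degree-$(t+1)$ polynomial $p_{t+1}(\lambda)$ is nonzero. The main work is then to identify this coefficient explicitly in terms of the matrix entries.

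The key identification step uses the factorizations of the $p_{n-k}(\lambda)$ established in the proof of Lemma~\ref{GCMinBP(i-iii)}, namely $p_{n-k}(\lambda)=P_{A[1,\ldots,j_k-1]}(\lambda)\,P_{A[i_k+1,\ldots,n]}(\lambda)$. For a length-two component, the two indices producing degrees $t$ and $t+1$ correspond (after translating $n-k$ back to $k$) to the two variables lying on a common $k$-subdiagonal: these are exactly $x_{k+1}$ and $x_{k+2}$, each a degree-$(n-k-1)$ polynomial, so the subcase~\ref{2var} is the one to match. The coefficient $c$ is the second-highest coefficient of a product of two characteristic polynomials, and the second-highest coefficient of a monic characteristic polynomial $P_M(\lambda)$ of a submatrix $M$ is $-\operatorname{trace}(M)$, which is the negative of a partial sum of diagonal entries $\sum a_{rr}$ of $A$. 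Carrying out the product-rule for the second coefficient and comparing the two products (one for $x_{k+1}$, one for $x_{k+2}$), the nonvanishing of $c$ becomes exactly the inequality $\sum_{r=i-k}^{i}a_{rr}\neq\sum_{r=i'-k}^{i'}a_{rr}$ appearing in condition~\ref{2var}. Finally, subcases~\ref{1var} and~\ref{0var} correspond to $k$-subdiagonals over which the concatenation component has length one (or no variable sits), which are covered by the always-nonsingular singleton blocks; here I would invoke Lemma~\ref{GCMinBP(i-iii)}~(\ref{VariableKOverDiagonalK}) to pin down which variable index can occupy each subdiagonal, guaranteeing that the only admissible variable on the $k$-subdiagonal is $x_{k+1}$ in case~\ref{1var}.

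The hard part will be the bookkeeping that converts the abstract block condition ``$\mathfrak{M}_A[b_h,e_h]$ nonsingular'' into the stated per-subdiagonal trace inequality: one must correctly match the polynomial indices $t,t+1$ of a length-two component to the variable indices $k+1,k+2$ on a $k$-subdiagonal, extract the right coefficient from a product of two characteristic polynomials, and recognize that coefficient as a difference of traces of the relevant leading/trailing submatrices. The degree arithmetic from Lemma~\ref{GCMinBP(i-iii)} does most of the structural work, but verifying that the two variables on a common subdiagonal really do generate a genuine length-two component (rather than splitting into two length-one components) — and conversely that every length-two component arises this way — is where the correspondence must be pinned down carefully in both directions.
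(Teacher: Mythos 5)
Your overall strategy---reduce everything to Theorem~\ref{pbCompanionConcatenation}, dispose of singleton blocks by monicity, and identify nonsingularity of the $2\times 2$ blocks with the trace inequality of condition~\ref{2var}---is the same as the paper's. But your degree bookkeeping for a length-two component is wrong, and it breaks the central computation. For a component $(t,t+1)$ of a PB-companion matrix you assert $\deg(p_t)=t-1$; this contradicts Lemma~\ref{GCMinBP(i-iii)}~(\ref{DegreeOfThePs}), which gives $\deg(p_t)\geq t$ whenever $A$ is PB-companion. The correct reading is: $t$ not an endpoint means $\deg(p_t)\neq t$, so $\deg(p_t)\geq t+1$, and by monotonicity (Lemma~\ref{GCMinBP(i-iii)}~(\ref{OrderDegrees})) $\deg(p_t)\leq\deg(p_{t+1})=t+1$; hence $\deg(p_t)=\deg(p_{t+1})=t+1$, i.e.\ the two polynomials share the \emph{same} degree. (This is exactly what happens in the paper: with $t=n-k-2$, both $p_{n-k-1}$ and $p_{n-k-2}$ are monic of degree $n-k-1$ because $x_{k+1}$ and $x_{k+2}$ sit on the same $k$-subdiagonal.) Consequently the block is not of your shape $\bigl[\begin{smallmatrix} c & 0\\ * & 1\end{smallmatrix}\bigr]$ with nonsingularity equivalent to a single coefficient $c\neq 0$; it is $\bigl[\begin{smallmatrix} p_{t,t} & 1\\ p_{t+1,t} & 1\end{smallmatrix}\bigr]$, whose determinant is the \emph{difference} of the two subleading coefficients, i.e.\ a difference of complementary trace sums---which is precisely why the criterion is an inequality between two sums, $\sum_{r=i-k}^{i}a_{rr}\neq\sum_{r=i'-k}^{i'}a_{rr}$, and not the nonvanishing of one coefficient. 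Indeed, under your degree pattern the entire $p_t$-row of the block would be zero and the block would be automatically singular, so no length-two component could ever occur in a PB-companion matrix, contradicting the paper's examples. Your own text is internally inconsistent here (later you correctly say both polynomials have degree $n-k-1$), so the right computation is within reach, but as written the key step does not go through.

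The second gap is that you explicitly defer what the paper actually has to prove: the dictionary between subdiagonal patterns \ref{1var}--\ref{0var} and the component structure, in both directions. In the forward direction the paper runs a case analysis on the smallest-index variable of each $k$-subdiagonal, using Lemma~\ref{GCMinBP(i-iii)}~(\ref{VariableKOverDiagonalK}) to exclude index $h\leq k$, and showing that $h\geq k+2$, or three or more variables on one subdiagonal, force a concatenation component of length at least three. In the converse direction it first establishes that each $x_h$ lies on the $(h-1)$- or $(h-2)$-subdiagonal, then proves that the $k$-subdiagonal satisfies~\ref{2var} if and only if the $(k+1)$-subdiagonal satisfies~\ref{0var}, and reads off from this pairing that the components have length at most two with the required blocks. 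You flag this correspondence as ``the hard part'' but supply no argument for it, so your proposal is a correct plan only at the level of invoking Theorem~\ref{pbCompanionConcatenation}: of the two substantive ingredients---the degree pattern of a length-two component and the subdiagonal/component correspondence---the first is wrong and the second is missing.
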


\begin{proof}
$\Rightarrow$ Assume that $A$ is a PB-companion matrix of $\widetilde{\mathcal{B}}$ such that the length of the components of the concatenation associated to $A$ is either one or two.   

We will  consider all the possibilities for each $k$-subdiagonal of $A$:
  \begin{enumerate}[1)]
  \item On the $k$-subdiagonal of $A$ there are no variables.  This is case~\ref{0var}.

    \item \label{2} On the $k$-subdiagonal the variable with smallest index is $x_{h}$ with $h\leq k$. Then $x_h$ is on or below the $h$-subdiagonal. A  contradiction with Lemma~\ref{GCMinBP(i-iii)}~(\ref{VariableKOverDiagonalK}).
    
    \item \label{3} On the $k$-subdiagonal the variable with smallest index is  $x_{h}$ with $h\geq  k+2$. Then neither $x_{k+1}$ is on the $k$-subdiagonal nor $x_{k+2}$ is on the $(k+1)$-subdiagonal. Equivalently,   $\deg(p_{n-k-2}(\lambda))\neq n-k-2$ and $\deg(p_{n-k-1}(\lambda))\neq n-k-1$. So the  concatenation has a component of length at least three. A contradiction with the hypothesis.

   \item \label{firstxk+1} On the $k$-subdiagonal the variable with smallest index is  $x_{k+1}$. We consider three subcases: 
   \begin{enumerate}[i)]
       \item The  only variable on the $k$-subdiagonal  is $x_{k+1}$. This is case~\ref{1var}.
       \item \label{k-subXk+1Xk+2} The  only variables on the $k$-subdiagonal  are   $x_{k+1}$ and  $x_{k+2}$. Three new possibilities appear:
       \begin{enumerate}[a)]
           \item The variable $x_{k+3}$ is on the $(k+1)$-subdiagonal.  The same situation as in~\ref{3}). 
           \item The variable $x_{k+3}$ is on the $(k+h)$-subdiagonal with $h\geq 3$. The same situation as in ~\ref{2}). 
           \item \label{Xk+3}  The variable $x_{k+3}$ is on the $(k+2)$-subdiagonal. So we have that          
           $$
           \deg(p_{n-k-3}(\lambda))= n-k-3, \, \deg(p_{n-k-2}(\lambda))\neq n-k-2, \text{ and } \deg(p_{n-k-1}(\lambda))= n-k-1.
           $$
           Therefore $\,^\frown(n-k-2,n-k-1)^\frown$ is a component of the concatenation of length two. As $x_{k+1}$ is on the $k$-subdiagonal then $x_{k+1}$ is the $(i,i-k)$ entry of $A$ for some $i$ and 
            \begin{align*}
                p_{n-k-1}(\lambda)&=P_{A[1,\ldots,i-k-1]}(\lambda) \, P_{A[i+1,\ldots,n]}(\lambda)  \\ 
                &=(\lambda^{i-k-1}-\sum_{r=1}^{i-k-1}a_{rr}\lambda^{i-k-2}+\cdots)\,(\lambda^{n-i}-\sum_{r=i+1}^n a_{rr}\lambda^{n-i-1}+\cdots)\\
                &=\lambda^{n-k-1}-\sum_{r\in\{1,\ldots,n\}\setminus\{i-k,\ldots,i\}}a_{rr}\lambda^{n-k-2}+\cdots.
            \end{align*}
            Analogously, as $x_{k+2}$ is on the $k$-subdiagonal then $x_{k+2}$ is the $(i',i'-k)$ entry of $A$ for some $i'$ and             
            \begin{align*}
                p_{n-k-2}(\lambda)&=\lambda^{n-k-1}-\sum_{r\in\{1,\ldots,n\}\setminus\{i'-k,\ldots,i'\}}a_{rr}\lambda^{n-k-2}+\cdots.
            \end{align*}          
            Therefore 
           $$\mathfrak{M}_{A}[n-k-2,n-k-1]=\begin{bmatrix}
           p_{n-k-2,n-k-2} & p_{n-k-2,n-k-1} \\ 
           p_{n-k-1,n-k-2} & p_{n-k-1,n-k-1}
           \end{bmatrix}=\begin{bmatrix}
           -\sum_{r\in\{1,\ldots,n\}\setminus\{i-k,\ldots,i\}}a_{rr} & 1 \\ 
           -\sum_{r\in\{1,\ldots,n\}\setminus\{i'-k,\ldots,i'\}}a_{rr} & 1
           \end{bmatrix}.$$
           By Theorem~\ref{pbCompanionConcatenation}, $\mathfrak{M}_{A}[n-k-2,n-k-1]$ is nonsingular. And this is so  if and only if $\displaystyle\sum_{r=i-k}^i a_{rr}\neq \sum_{r=i'-k}^{i'} a_{rr}$. This is case~\ref{2var}.                
           \end{enumerate}
       
       \item On the $k$-subdiagonal  there are, at least, three variables   $x_{k+1}$, $x_{k+2}$ and $x_{k+3}$. In this case neither $x_{k+2}$ is on the $(k+1)$-subdiagonal of $A$ nor  $x_{k+3}$ is on the $(k+2)$-subdiagonal of $A$. Equivalently,  $\deg(p_{n-k-3}(\lambda))\neq n-k-3$ and $\deg(p_{n-k-2}(\lambda))\neq n-k-2$. So the  concatenation has a component of length at least three. A contradiction with the hypothesis.
   \end{enumerate}

\end{enumerate}
$\Leftarrow$ Assume that  each  $k$-subdiagonal verifies \ref{1var}, or \ref{2var}, or \ref{0var}. It is useful to note that  
\begin{equation}\label{xhH2H1}
\text{each variable $x_h$ is either on the $(h-1)$ or on the $(h-2)$-subdiagonal of $A$.}
\end{equation}

Let us  prove that the $k$-subdiagonal verifies \ref{2var} if and only if the $(k+1)$-subdiagonal verifies \ref{0var}. For the sufficiency assume that the $k$-subdiagonal verifies \ref{2var}, that is, that on the $k$-subdiagonal the only variables are $x_{k+1}$ and $x_{k+2}$. Then $x_{k+2}$ is not on   the  $(k+1)$-subdiagonal which, by discarding non-viable options, implies that the  $(k+1)$-subdiagonal  verifies \ref{0var}.  For the necessity, assume that the $(k+1)$-subdiagonal verifies \ref{0var}, that is, that the $(k+1)$-subdiagonal has no variables. Then $x_{k+2}$ is on the $k$-subdiagonal because of~(\ref{xhH2H1}), what makes the only viable option that $x_{k+1}$ is on the $k$-subdiagonal and that the $k$-subdiagonal verifies \ref{2var}.

By what we have just shown, if we traverse the subdiagonals in order we will  sometimes find consecutive subdiagonals that verify \ref{2var} and \ref{0var}, and the rest of subdiagonals must verify \ref{1var}. The translation into components of the concatenation~(\ref{Concatenation}) is that two consecutive subdiagonals that verify \ref{2var} and  \ref{0var} correspond to a component of length two, and a subdiagonal that verifies \ref{1var} corresponds to a component of length one. Let us see why this is so:

\begin{itemize}
    \item If two consecutive subdiagonals $k$ and $k+1$  verify \ref{2var} and \ref{0var}, then the only variables on the $k$-subdiagonal are $x_{k+1}$ and $x_{k+2}$, and the $(k+1)$-subdiagonal has no variables. Furthermore, $x_{k+3}$ must be in the $(k+2)$-subdiagonal because of~(\ref{xhH2H1}). So $$\deg(p_{n-k-3}(\lambda))= n-k-3, \ \deg(p_{n-k-2}(\lambda))\neq n-k-2,  \text{ and } \deg(p_{n-k-1}(\lambda))= n-k-1,$$ which means that $\,^\frown(n-k-2,n-k-1)^\frown$ is a component of the concatenation~(\ref{Concatenation}) of length two.  If  $x_{k+1}$ is on the $(i,i-k)$ entry of $A$ and $x_{k+2}$  is on the $(i',i'-k)$ entry of $A$ then (as we saw  above in  \ref{Xk+3})
    $$\mathfrak{M}_{A}[n-k-2,n-k-1]=\begin{bmatrix}
           -\sum_{r\in\{1,\ldots,n\}\setminus\{i-k,\ldots,i\}}a_{rr} & 1 \\ 
           -\sum_{r\in\{1,\ldots,n\}\setminus\{i'-k,\ldots,i'\}}a_{rr} & 1
           \end{bmatrix},$$
    which is nonsingular since  $\displaystyle\sum_{r=i-k}^{i} a_{rr} \neq \sum_{r= i'-k}^{i'} a_{rr}$ by hypothesis.  

    \item If the $k$-subdiagonal verifies \ref{1var}, then the only variable on the $k$-subdiagonal is $x_{k+1}$. Furthermore, $x_{k+2}$ must be in the $(k+1)$-subdiagonal because of~(\ref{xhH2H1}). So $$\deg(p_{n-k-2}(\lambda))= n-k-2 \text{ and }  \deg(p_{n-k-1}(\lambda))= n-k-1,$$ which means that  $\,^\frown(n-k-1)^\frown$ is a component of the concatenation~(\ref{Concatenation}) of length one. The corresponding principal submatrix of $\mathfrak{M}_{A}$ is $\mathfrak{M}_{A}[n-k-1]=[1]$, since  $p_0(\lambda), \ldots, p_n(\lambda)$ are monic by Theorem~\ref{GCMinBP(i-iii)}~(\ref{MonicPolynomialsForB}). 
\end{itemize}

Finally, by Theorem~\ref{pbCompanionConcatenation}, $A$ is PB-companion because we have shown that all the principal submatrices of $\mathfrak{M}_A$ that correspond to the components of the concatenation are nonsingular. 
\end{proof}

\begin{example}
The matrix of   $\widetilde{B}_{(5,5), (5,4), (6,5),  (5,2), (6,3), (6,1)}$  given by
$$A  =\begin{bmatrix}
a_{11} & 1 & 0 & 0 & 0 & 0  \\
a_{21} & a_{22} & 1 & 0 & 0 & 0  \\
a_{31} & a_{32} & a_{33} & 1 & 0 & 0  \\
a_{41} & a_{42} & a_{43} & a_{44} & 1 & 0  \\ 
a_{51} & x_4 & a_{53} & x_2 & x_1 &  1 \\
x_6 & a_{62} & x_5 & a_{63} & x_3 &  a_{66} \\
\end{bmatrix}.
$$
is, by Theorem~\ref{CharacterizationLength1or2},  PB-companion if and only if $a_{44}\neq a_{66}$ and $a_{22}\neq a_{66}$.

Let us see it in detail.  If the characteristic polynomial of $A$ is 
$$P_A(\lambda)=p_6(\lambda)-x_1 p_{5}(\lambda)-x_2 p_{4}(\lambda)-x_3 p_{3}(\lambda)-x_4 p_{2}(\lambda)-x_{5}p_1(\lambda)-x_6p_0(\lambda)$$
then  
$$\left.\begin{array}{rlllll}
x_6 \rightarrow&p_0(\lambda)=P_{A[\emptyset]}(\lambda)=1. \\
x_5 \rightarrow&p_1(\lambda)=P_{A[1,2]}(\lambda)=\lambda^2+(-a_{11}-a_{22})\lambda+\cdots; \\
x_4 \rightarrow&p_2(\lambda)=P_{A[1]}(\lambda)\, P_{A[6]}(\lambda)=\lambda^2+(-a_{11}-a_{66})\lambda+\cdots; \\
x_3 \rightarrow&p_3(\lambda)=P_{A[1,2,3,4]}(\lambda)=\lambda^4+(-a_{11}-a_{22}-a_{33}-a_{44})\lambda^3+\cdots; \\
x_2 \rightarrow&p_4(\lambda)=P_{A[1,2,3]}(\lambda)\, P_{A[6]}(\lambda)=\lambda^4+(-a_{11}-a_{22}-a_{33}-a_{66})\lambda^3+\cdots;  \\
x_1 \rightarrow&p_5(\lambda)=P_{A[1,2,3,4]}(\lambda)\, P_{A[6]}(\lambda)=\lambda^5+\cdots; \\
1 \, \rightarrow&p_6(\lambda)=P_{A_0}(\lambda)=\lambda^6+\cdots; \\
\end{array}\right.$$
And therefore the matrix $\mathfrak{M}_{A}$ of coefficients is
\begin{align*}
\begin{array}{c|ccccccc|} 
    & 1 & \lambda & \lambda^2 & \lambda^3 & \lambda^4 & \lambda^5 & \lambda^6 \\  \cline{1-8}
p_{0}(\lambda)   & \y 1 & 0 & 0 & 0 & 0 & 0 & 0 \\  %\cline{3-3}
p_{1}(\lambda)   & * & \y -a_{11}-a_{22} & \y 1 & 0 & 0 & 0 & 0 \\  
p_{2}(\lambda)   & * & \y -a_{11}-a_{66} & \y 1 & 0 & 0 & 0 & 0 \\  %\cline{4-6}
p_{3}(\lambda)   & * & * & * & \y -a_{11}-a_{22}-a_{33}-a_{44} & \y 1 & 0 & 0 \\  %\cline{6-9}
p_{4}(\lambda)   & * & * & * & \y -a_{11}-a_{22}-a_{33}-a_{66} & \y 1 &  0 &  0 \\  
p_{5}(\lambda)   & * & * & * & * &  * & \y 1 &  0 \\  
p_{6}(\lambda)   & * & * & * & * &  * &  * & \y 1 \\  \cline{1-8}
\end{array}
\end{align*}
Note that the concatenation associated to $A$ is 
\bigskip
\begin{align*}
(0,1,2,3,4,5,6) & = (\smash{\overset{\overset{b_1, e_1}{\shortparallel}}{0}})^\frown (\smash{\overset{\overset{b_2}{\shortparallel}}{1}}, \smash{\overset{\overset{e_2}{\shortparallel}}{2}})^\frown (\smash{\overset{\overset{b_3}{\shortparallel}}{3}}, \smash{\overset{\overset{e_3}{\shortparallel}}{4}})^\frown (\smash{\overset{\overset{b_4, e_4}{\shortparallel}}{5}})^\frown(\smash{\overset{\overset{b_5, e_5}{\shortparallel}}{6}}),
\end{align*}
where $0,2,4,5,6$ are all $t\in\{0,1,2,3,4,5,6\}$ such  that  $\deg(p_t(\lambda))=t$. So  $A$ is PB-companion if and only if $\det(\mathfrak{M}_{A}[1,2])=-a_{22}+a_{66}\neq 0$  and $\det(\mathfrak{M}_{A}[3,4])=-a_{44}+a_{66}\neq 0$.
\end{example}

\bibliographystyle{plain}

\end{document}